\newtheorem{tm}{Theorem}
\newtheorem{defi}{Definition}
\newtheorem{rem}{Remark}
\newtheorem{rems}{Remarks}
\newtheorem{lm}{Lemma}
\newtheorem{ex}{Example}
\newtheorem{prop}{Proposition}
\newtheorem{nota}{Notation}
\newtheorem{prob}{Problem}
\begin{document}

\title{On a problem inspired by Descartes' rule of signs}
\author{Vladimir Petrov Kostov}

\address{Universit\'e C\^ote d’Azur, CNRS, LJAD, France}
\email{vladimir.kostov@unice.fr}

\begin{abstract}
  We study real univariate polynomials with non-zero coefficients and with all
  roots real, out of which exactly two positive. The sequence of
  coefficients of such a polynomial begins with $m$ positive coefficients
  followed by $n$ negative followed by $q$ positive coefficients. We consider
  the sequence of moduli of their roots on the positive real half-axis;
  all moduli are supposed distinct. 
  We mark in this sequence the
  positions of the moduli of the two positive roots.
  For $m=n=2$, $n=q=2$ and $m=q=2$, we give
  the exhaustive answer to the question which the positions of the two
  moduli of positive roots can be.

{\bf Key words:} real polynomial in one variable; hyperbolic polynomial;
  sign pattern; Descartes' rule of signs\\

{\bf AMS classification:} 26C10; 30C15
\end{abstract}
\maketitle 

\section{Introduction}

The present paper treats a problem inspired by Descartes' rule of signs.
The latter says that given a real univariate polynomial $Q:=\sum _{j=0}^da_jx^j$,
$a_d\neq 0$, the number $r_+$ of its positive roots
(counted with multiplicity) is
majorized by the number $\tilde{c}$ of the sign changes in the sequence $S$ 
of its coefficients and the difference $\tilde{c}-r_+$ is even.
(About Descartes' rule of signs see
\cite{Ca}, \cite{Cu}, \cite{DG},
\cite{Des}, \cite{Fo}, \cite{Ga}, \cite{J}, \cite{La} or \cite{Mes}.)

We are interested in the case when all coefficients $a_j$ are non-zero and
the polynomial $Q$ is {\em hyperbolic}, i.~e. all its roots are real. In this
case one has $\tilde{c}=r_+$ and $\tilde{p}=r_-$, where $\tilde{p}$ is the
number of sign preservations in the sequence $S$ and $r_-$
is the number of negative roots of $Q$. Clearly
$\tilde{c}+\tilde{p}=r_++r_-=d$.

\begin{defi}
  {\rm A {\em sign pattern} is a vector whose components equal $+$ or $-$.
    The polynomial $Q$ is said to define the sign pattern 
    $\sigma (Q):=({\rm sgn}(a_d),{\rm sgn}(a_{d-1}),\ldots ,{\rm sgn}(a_0))$.
  We focus mainly on monic polynomials in which case sign patterns begin
  with a~$+$.}
\end{defi}

Consider the moduli of the roots of a hyperbolic polynomial
as a sequence of $d$ points on
the positive half-axis. We study the {\em generic} case when all these
moduli are distinct. One can mark in this sequence the
$\tilde{p}$ positions of the moduli of negative and the $\tilde{c}$ positions
of moduli of positive roots. This defines the {\em order of moduli}
whose definition and notation should be clear from the following example:

\begin{ex}
  {\rm If for the positive roots $\alpha _1<\alpha _2<\alpha _3$ and the
    moduli of the negative roots $|-\gamma _1|<\cdots <|-\gamma _4|$ of a
    degree $7$ hyperbolic polynomial one has}

  $$\alpha _1<|-\gamma _1|<|-\gamma _2|<\alpha _2<|-\gamma _3|<\alpha _3<
  |-\gamma _4|~,$$
  {\rm then these moduli define the order $PNNPNPN$ (the letters $P$ and $N$
    refer to the relative positions of the moduli of positive and
    negative roots).}
  \end{ex}

\begin{defi}
  {\rm A couple (sign pattern, order of moduli) is {\em compatible with
      Descartes' rule of signs} if the number of letters $P$ (resp. $N$) in
    the order equals $\tilde{c}$ (resp. $\tilde{p}$). In what follows we
    consider only couples compatible with
    Descartes' rule of signs.}
  \end{defi}

We study the following problem:

\begin{prob}\label{prob1}
  Consider the class of hyperbolic polynomials defining one and the same
  sign pattern $\sigma$. What are the possible orders of moduli for the
  polynomials of this class?
  \end{prob}

The problem has been completely resolved for $\tilde{c}=0$ and~1,
see~\cite{KoPuMaDe}. The results of this paper concern the case
$\tilde{c}=2$. We use the following
notation:

\begin{nota}
  {\rm (1) For $\tilde{c}=1$, 
    (resp. $\tilde{c}=2$), we denote by $\Sigma _{m,n}$, $m+n=d+1$, 
    (resp. $\Sigma _{m,n,q}$, $m+n+q=d+1$) the sign pattern consisting of
    $m$ signs $+$
    followed by $n$ signs $-$ (resp. of $m$ signs $+$
    followed by $n$ signs $-$ followed by $q$ signs~$+$).
    \vspace{1mm}
    
    (2) For a polynomial $Q$ with $\sigma (Q)=\Sigma _{m,n}$, we denote by
    $\alpha$ its positive and by $\gamma _1<\cdots <\gamma _{d-1}$ the moduli of
    its negative roots. If $\gamma _u<\alpha <\gamma _{u+1}$
    (we set $\gamma _0:=0$ and 
    $\gamma _d:=+\infty$), then we denote
    the given couple (sign pattern, order of moduli) by
    $(\Sigma _{m,n}, (u,v))$, $v=d-1-u$.
    \vspace{1mm}
    
(3) For a polynomial $Q$ with $\sigma (Q)=\Sigma _{m,n,q}$, we denote by
    $\beta <\alpha$ its positive and by
    $\gamma _1<\cdots <\gamma _{d-2}$ the moduli of
    its negative roots. If $\gamma _u<\beta <\gamma _{u+1}$ and
    $\gamma _{u+v}<\alpha <\gamma _{u+v+1}$, $v\geq 0$ (we set $\gamma _0:=0$ and 
    $\gamma _{d-1}:=+\infty$), then we denote
    the given couple (sign pattern, order of moduli) by
    $(\Sigma _{m,n,q}, (u,v,w))$, $w=d-2-u-v$.}
\end{nota}

\begin{defi}
  {\rm If for a given sign pattern there exists a hyperbolic polynomial
    defining this sign pattern, then we say that the polynomial
    {\em realizes} the sign pattern. If, in addition, the
    roots of the polynomial define a given order of moduli, then we say that
    the polynomial realizes the given couple (sign pattern,
    order of moduli) or that the order of moduli is realizable with the given
    sign pattern.}
  \end{defi}

We can now reformulate Problem~\ref{prob1}:

\begin{prob}\label{prob2}
  For a given degree $d$, which couples (sign pattern, order of moduli)
  are realizable?
  \end{prob}

\begin{defi}\label{defiZ2Z2}
  {\rm (1) For a given degree $d$, we define the following
    two commuting involutions acting 
    on the set of couples:}

  $$i_m~:~Q(x)\mapsto (-1)^dQ(-x)~~~\, {\rm and}~~~\, i_r~:~Q(x)\mapsto
  x^dQ(1/x)/Q(0)~.$$
  {\rm The factors $(-1)^d$ and $1/Q(0)$ are introduced to preserve the set
    of monic polynomials. The involution $i_r$ reads orders, 
    sign patterns and polynomials (modulo the factor $1/Q(0)$) from the right
    while preserving the quantities $\tilde{c}$ and~$\tilde{p}$. In
    particular, $i_r((\Sigma _{m,n},(u,v)))=(\Sigma _{n,m},(v,u))$ and
    $i_r((\Sigma _{m,n,q},(u,v,w)))=(\Sigma _{q,n,m},(w,v,u))$. 
    The involution $i_m$ changes the signs of the odd (resp. even) monomials for
    $d$ even (resp. for $d$ odd). It exchanges the letters
    $P$ and $N$ in the order of moduli and the quantities $\tilde{c}$ and
    $\tilde{p}$, therefore when answering Problem~\ref{prob2} it suffices
    to study the cases with $\tilde{c}\leq \tilde{p}$. 
    \vspace{1mm}

    (2) The {\em orbits} of couples under the
    $\mathbb{Z}_2\times \mathbb{Z}_2$-action
    are of length $4$ or $2$. Orbits of length $2$ can occur only for
    sign patterns $\sigma$ such that $\sigma =i_r(\sigma )$ or
    $\sigma =i_ri_m(\sigma )$; $\sigma =i_m(\sigma )$ is impossible.
    One can consider orbits also only of sign
    patterns or of orders of moduli. All couples of a given orbit are
  simultaneously (non)-realizable.}
\end{defi}

\begin{rems}\label{remscanon}
{\rm (1) Problem~\ref{prob2} is completely resolved for $d\leq 6$,
see~\cite{GaKoTa} and~\cite{Kodeg6}. For $\tilde{c}=2$,
it is settled for $n=1$, see
\cite[Theorem~5]{KoPuMaDe}, and for $q=1$ (hence for $m=1$ as well),
see~\cite{KoSerd23}.
\vspace{1mm}

(2) Each sign pattern is realizable with its {\em canonical} order of
moduli, see \cite[Definition~2 and Proposition~1]{KoSe}.
For the sign pattern $\Sigma _{m,n,q}$, the couple with the corresponding
canonical order is $(\Sigma_{m,n,q},(q-1,n-1,m-1))$. There exist sign patterns
(called also {\em canonical}) which are realizable only with their
corresponding canonical orders, see \cite[Theorem~7]{KoRM}. Among the sign
patterns of the nform $\Sigma_{m,n,q}$, canonical are only $\Sigma_{1,n,1}$
and $\Sigma_{m,1,q}$. 
The relative part of
canonical sign patterns within the set of all
sign patterns of a given degree $d$, tends to $0$
as $d$ tends to $\infty$, see \cite[Definition~9 and Proposition~10]{KoRM}.

(3) There exist also orders of moduli
(called {\em rigid}) realizable with a single sign pattern, see
\cite[Definition~6, Notation~7 and Theorem~8]{KorigMO}. } 
\end{rems}

The first result of the present paper 
about couples $(\Sigma _{m,n,q},(u,v,w))$ reads:

\begin{tm}\label{tmSm22}
  (1) Suppose that $d\geq 7$ and $n=q=2$. Then realizable can be only
  couples with $w\geq m-3$. For each $d\geq 7$ fixed, there are $15$
  triples $(u,v,w)$ satisfying the latter condition.

  (2) For $d\geq 6$, the $10$ triples $(u,v,w)$ with $u+v\leq 3$ and the
  triple $(0,4,m-3)$, are realizable
  with the sign pattern
  $\Sigma _{m,2,2}$, $d=m+3$.

  (3) For $d\geq 6$, the triples $(4,0,m-3)$, $(3,1,m-3)$, $(2,2,m-3)$ and
  $(1,3,m-3)$
  are not realizable  with the
  sign pattern $\Sigma _{m,2,2}$.
\end{tm}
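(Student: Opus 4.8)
The plan is to argue by contradiction. Suppose a monic hyperbolic $Q=(x-\beta)(x-\alpha)\prod_{j=1}^{m+1}(x+\gamma_j)$ of degree $d=m+3\geq 6$, with $0<\beta<\alpha$ and $0<\gamma_1<\cdots<\gamma_{m+1}$, has $\sigma(Q)=\Sigma_{m,2,2}$ and realizes one of the four triples $(u,v,m-3)$ with $u+v=4$, $u\geq 1$. Here $u\geq 1$ means $\gamma_1<\beta$, and $u+v=4$ means $\gamma_4<\alpha<\gamma_5$ (read $\gamma_5:=+\infty$ when $m=3$). So part~(3) is exactly the assertion that the data $\gamma_1<\beta$, $\alpha<\gamma_5$ is incompatible with $\sigma(Q)=\Sigma_{m,2,2}$; equivalently, it says that at the boundary $w=m-3$ of part~(1) one necessarily has $u=0$, i.e. the couple is forced to be $(0,4,m-3)$ (which part~(2) realizes). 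I would therefore try to sharpen the estimate behind part~(1) to this tight boundary form rather than treat part~(3) separately.

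For the smallest degrees the four triples are $(4,0,0),(3,1,0),(2,2,0),(1,3,0)$ at $d=6$, and non-realizability is contained in the complete degree-$6$ classification of \cite{Kodeg6} (see also \cite{GaKoTa}); for $d=7$ one can use a direct computation or the derivative reduction below. So the real content is $d\geq 7$, i.e. $m\geq 4$, where $\gamma_5$ exists and $\alpha<\gamma_5$ is the operative constraint.

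For $d\geq 7$ I would apply the involution $i_r$, passing to $\sigma(i_r(Q))=\Sigma_{2,2,m}$, whose positive roots are $1/\alpha<1/\beta$ and whose negative moduli are $1/\gamma_{m+1}<\cdots<1/\gamma_1$; under $i_r$ our couple becomes $(\Sigma_{2,2,m},(m-3,v,u))$ with $u\geq 1$, so the hypothesis reads: the smaller positive root $1/\alpha$ has exactly $m-3$ negative moduli below it, while the larger positive root $1/\beta$ has at least one negative modulus (namely $1/\gamma_1$) above it. The advantage of this form is that $\Sigma_{2,2,m}$ carries a long block of positive low-order coefficients, so for small $x>0$ the value $i_r(Q)(x)$ is a sum of positive terms and the positive roots of $i_r(Q)$ cannot be too small on the scale fixed by the first few coefficients; turning this into the precise inequality among $1/\alpha$, $1/\beta$ and $1/\gamma_1$ --- the one that yields part~(1) and, pushed to equality, forces $u=0$ --- is the heart of the matter. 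I expect it to require either a judicious choice of test points for $i_r(Q)$ on the interval between its two positive roots, or an explicit identity expressing the low-order coefficients $b_0,b_1,b_2,b_3$ of $i_r(Q)$ through the symmetric functions of the roots; separating the four values of $u$ then adds a further layer of bookkeeping. This sharpness requirement --- an inequality tight for every $m\geq 4$, not merely asymptotically --- is the step I expect to be the main obstacle.

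As a cross-check and fallback I would keep the descent through the derivative. One has $\sigma(Q')=\Sigma_{m,2,1}$ (drop the trailing $+$), $Q'$ is hyperbolic, and by Rolle's theorem its roots interlace those of $Q$, so $Q'$ has exactly two positive roots $0<\beta'<\beta<\alpha'<\alpha$ (the root between $-\gamma_1$ and $\beta$ must be positive, otherwise $Q'$ would contradict Descartes' rule) and negative moduli $\delta_i\in(\gamma_i,\gamma_{i+1})$, $i=1,\dots,m$. Hence the order of $Q'$ lies in a short explicit list depending on where $\beta'$ and $\alpha'$ fall among the $\delta_i$, and one can try to exclude every member of it using the settled classification for the sign pattern $\Sigma_{m,2,1}$ (the case $q=1$) from \cite{KoSerd23}; should the derivative prove too lossy, a second differentiation lands on $\Sigma_{m,2}$ with $\tilde{c}=1$, completely resolved in \cite{KoPuMaDe}. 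Here again the obstacle is to pin down which orders $Q'$ can realize, so I expect the reversal estimate, not the derivative, to do the real work.
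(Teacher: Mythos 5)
Your submission is a strategy outline rather than a proof: at the decisive point you yourself write that turning the reversal $i_r(Q)$ into ``the precise inequality among $1/\alpha$, $1/\beta$ and $1/\gamma_1$'' is ``the heart of the matter'' and ``the main obstacle'', and you never produce that inequality; the fallback via $Q'$ is likewise left open with the admission that the list of orders which $Q'$ can realize is not pinned down (and since $Q'$ exists whenever $Q$ does, a contradiction arises only if \emph{every} order compatible with the Rolle interlacing is already known to be non-realizable for $\Sigma_{m,2,1}$ --- a verification you do not attempt). Neither route is carried out, so part~(3) is not established. Moreover parts~(1) and~(2) of the theorem are not addressed at all: part~(1) requires a genuine argument (the paper normalizes $\alpha=1$, uses the connectedness of the set of monic hyperbolic polynomials with sign pattern $\Sigma_{m,2,2}$ to reduce to a polynomial divisible by $x^2-1$ with exactly four moduli of negative roots below $1$, extracts from the sign conditions the inequality $e_{d-4}e_{d-7}>e_{d-5}e_{d-6}$ between products of elementary symmetric functions of the moduli, and refutes it by Newton's inequalities $S_j^2\geq S_{j-1}S_{j+1}$ together with a count of terms), and part~(2) needs the degree-$6$ base case from \cite{Kodeg6} plus multiplication by $(1+\varepsilon x)$.

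For part~(3) itself the paper's argument is short and different from both of your routes, and worth comparing. Assuming $u\geq 1$ and $u+v=4$, normalize $\alpha=1$ and split $\tilde{Q}=AU$ with $A=(x-\beta)(x+\gamma_1)$, whose sign pattern is $(+,-,-)$, and $U=(x-1)\prod_{j\geq 2}(x+\gamma_j)$. The sign pattern of $U$ is some $\Sigma_{\mu,\nu}$; expanding the coefficient $\tilde{q}_{\nu+1}$ of the product forces $\nu\leq 2$, and then Theorem~1 of \cite{KoPuMaDe} (the already settled case $\tilde{c}=1$) bounds by $2$ the number of negative roots of $U$ with modulus less than its positive root, contradicting $\gamma_2,\gamma_3,\gamma_4<1$. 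This is exactly the kind of reduction to the $\tilde{c}=1$ classification that your derivative descent is groping for, but applied to a cofactor of $\tilde{Q}$ rather than to $Q'$: the cofactor keeps the actual roots (no interlacing slack), which is why the reduction closes there, whereas differentiation leaves an ambiguous family of orders for $Q'$ that would still have to be excluded one by one against \cite{KoSerd23}.
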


The theorem is proved in Section~\ref{secprtmSm22}. 
It can be automatically reformulated for the case $m=n=2$ using the involution
$i_r$, see Definition~\ref{defiZ2Z2}, and for certain couples with
$\tilde{p}=2$ with the help of the 
involution~$i_m$.

Our second result is formulated as follows:

\begin{tm}\label{tmS2n2}
  (1) For $n\geq 4$, all couples $(\Sigma _{2,n,2},(u,v,w))$ with $u\leq 2$
  and $w\leq 2$ are realizable.
  \vspace{1mm}
  
  (2) For $n\geq 4$, all couples $(\Sigma _{2,n,2},(u,v,w))$ with $u\geq 3$ or
  $w\geq 3$ are not realizable.
\end{tm}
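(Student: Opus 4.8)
The plan is to analyze the sign pattern $\Sigma_{2,n,2}$ with $d=n+3$, so there are $n$ negative roots $-\gamma_1,\dots,-\gamma_n$ and two positive roots $\beta<\alpha$; the canonical order (Remarks~\ref{remscanon}(2)) is $(q-1,n-1,m-1)=(1,n-1,1)$, so both the smallest and the largest of all $n+2$ moduli are moduli of negative roots. I would first dispose of part (2), the non-realizability claims, since these follow from inequalities on coefficients forced by the sign pattern. Writing $Q(x)=(x-\beta)(x-\alpha)\prod_{j=1}^n(x+\gamma_j)$ and expanding, the top three coefficients are $+$ and the bottom two are $+$; concretely the coefficients of $x^{n+2}$, $x^{n+1}$, $x^n$ must be positive and so must the coefficients of $x^1,x^0$. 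The condition $w\geq 3$ means $\gamma_{n-2}<\alpha$, i.e. at least three of the $\gamma_j$ exceed $\alpha$; the condition $u\geq 3$ (after applying $i_r$, which sends $(\Sigma_{2,n,2},(u,v,w))$ to $(\Sigma_{2,n,2},(w,v,u))$) is the mirror statement, $\gamma_4>\beta$ is false for at least three small $\gamma_j$, i.e. at least three $\gamma_j<\beta$. So by the $i_r$-symmetry it suffices to rule out $u\geq 3$, and by $i_m$-type reasoning one reduces to a single extremal inequality.

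For the non-realizability I would extract the relevant coefficient conditions explicitly. The coefficient of $x^{n+1}$ in $Q$ equals $\gamma_1+\cdots+\gamma_n-\alpha-\beta$, which must be $>0$; the coefficient of $x$ equals $\alpha\beta\prod\gamma_j\cdot\bigl(\sum 1/\gamma_j-1/\alpha-1/\beta\bigr)$ up to sign bookkeeping, which must also have the correct (positive) sign, giving $\sum 1/\gamma_j>1/\alpha+1/\beta$. If $u\geq 3$, then $\gamma_1,\gamma_2,\gamma_3<\beta<\alpha$, so $1/\gamma_1+1/\gamma_2+1/\gamma_3>3/\beta>1/\alpha+1/\beta$ — this one is automatically satisfied, so it is the \emph{other} end that fails. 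The real obstruction: with $\gamma_1,\gamma_2,\gamma_3<\beta$ one needs to look one coefficient deeper. I expect the genuine constraint to come from the coefficient of $x^{n+1}$ together with that of $x^n$ (the requirement that $e_1>0$ and $e_2$ has the right sign among $\{\gamma_j\}\cup\{-\alpha,-\beta\}$), or symmetrically from the bottom, forcing that no more than two $\gamma_j$ can fall below $\beta$. The main obstacle is getting the combinatorics of these symmetric-function sign conditions to yield exactly the cutoff $u\leq 2$, $w\leq 2$ and not something weaker; I would handle it by a scaling normalization (set $\beta=1$, or $\alpha\beta=1$) and a careful case split, possibly invoking the already-settled cases $q=1$ and $n=1$ from \cite{KoSerd23} and \cite{KoPuMaDe} for boundary checks.

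For part (1), the realizability of all couples with $u\leq 2$ and $w\leq 2$, I would argue by explicit construction together with a deformation/perturbation argument. The starting point is Remarks~\ref{remscanon}(2): the canonical couple $(\Sigma_{2,n,2},(1,n-1,1))$ is realizable. From a realizing polynomial one wants to move $\beta$ and $\alpha$ past neighbouring $\gamma_j$'s while keeping hyperbolicity and the sign pattern. The standard device is to take a polynomial with a multiple root (a coalescence $\gamma_j=\gamma_{j+1}$ or $\gamma_j=\alpha$, etc.) and then split the multiple root in the two possible orders; crossing such a wall in the discriminant locus, while staying inside the open set where the sign pattern is preserved, realizes the neighbouring order. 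Concretely, for $u\leq 2$ one needs $\beta$ to be preceded by at most two $\gamma$'s, and for $w\leq 2$ one needs $\alpha$ followed by at most two $\gamma$'s; since $u+v+w=n-1$ and $n\geq 4$, the "bulk" $v=n-1-u-w$ of the $\gamma$'s sits between $\beta$ and $\alpha$, which is exactly the canonical-type configuration and should be the easy direction. I would therefore build, for each of the finitely many pairs $(u,w)\in\{0,1,2\}^2$, a one-parameter family $Q_t$ with $n-1-u-w$ middle roots placed far from $[\beta,\alpha]$ in a controlled way and the remaining $u+w\leq 4$ small/large $\gamma$'s tuned by hand, then verify positivity of all coefficients. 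The anticipated hard part here is the simultaneous control of the top two and bottom two coefficients' signs while $u$ or $w$ is nonzero — i.e. once some $\gamma_j<\beta$, showing $e_1=\sum\gamma_j-\alpha-\beta$ can still be kept positive — and I would resolve it by sending the $v$ middle roots to infinity (large $\gamma$'s), which makes $e_1>0$ free and leaves only the low-order coefficients to monitor, these being governed by the reciprocal polynomial and hence by the mirror argument from the $u\leq 2$ analysis.
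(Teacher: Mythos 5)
Your plan for part (1) is essentially sound and, once executed, would amount to the paper's argument: the paper realizes every $(u,v,w)$ with $u,w\leq 2$ by the concatenation $(\Sigma_{2,2},(u,2-u))\ast(\Sigma_{n-2},(n-3))\ast(\Sigma_{2,2},(2-w,w))$, using the fact that $\Sigma_{2,2}$ is realizable with each of the orders $(0,2)$, $(1,1)$, $(2,0)$; your idea of ``sending the $v$ middle roots far away in scale'' is exactly what the concatenation lemma formalizes. (Two small corrections: $\Sigma_{2,n,2}$ has $d=n+3$ and hence $n+1$ negative roots, not $n$; and the canonical order is $(1,n-1,1)$ only in your mistaken count --- with $n+1$ moduli $\gamma_j$ one has $u+v+w=n+1$.)

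Part (2) is where the real content of the theorem lies, and your proposal does not contain a proof of it --- it contains a correct identification of where the obstruction must come from (sign conditions on the coefficients forced by $\Sigma_{2,n,2}$) followed by an explicit admission that you do not see how to make the cutoff land at exactly $u,w\leq 2$. That difficulty is not a technicality one can wave at with ``a careful case split'': in the paper, a single coefficient inequality ($q_{d-2}>0$) suffices only for $w\geq 5$, for $w=4$ with $v\geq 1$, and for the couple $(0,n-2,3)$. For the remaining borderline cases $(u,v,4)$ with $u+v=n-3$ and $(u,v,3)$ with $u+v=n-2$, $u>0$, no single coefficient gives a contradiction; one must show that $q_2<0$ and $q_{d-2}<0$ cannot hold simultaneously, and this is done by an induction on $n$ whose base consists of three separate degree-$7$ statements ($(\Sigma_{2,4,2},(1,0,4))$, $(2,0,3)$ and $(1,1,3)$), each requiring genuinely hard extremal analysis: reduction to the boundary cases $\beta=y$ or $\alpha=\gamma_1$, Newton's and AM--GM inequalities, one-parameter deformation families $Q-tU$ tracking which roots can coalesce, and a discriminant computation for a cubic (Lemma~\ref{lmnotintersect}). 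None of this machinery, nor any substitute for it, appears in your proposal; moreover the one concrete inequality you do write down ($\sum 1/\gamma_j>1/\alpha+1/\beta$ when $u\geq 3$) you yourself observe is automatically satisfied and therefore yields nothing. As it stands, part (2) of your argument is a statement of intent, not a proof, and the theorem is not established.
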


The theorem is proved in Section~\ref{secprtmS2n2}.

\section{Proof of Theorem~\protect\ref{tmSm22}\protect\label{secprtmSm22}}

\begin{proof}
  Part (1). We set $\alpha :=1$ which can be obtained by a linear change of
  the variable~$x$. The set of monic hyperbolic polynomials defining the sign
  pattern $\Sigma _{d-3,2,2}$ is open and connected (see \cite[Theorem~2]{KoAnn})
  and there exists a hyperbolic 
  polynomial with this sign pattern and with $w=m-1$, see part(2) of
  Remarks~\ref{remscanon}.
  Hence if there exists such a
  polynomial with $w\leq m-4$, then for this sign pattern,
  there exists a hyperbolic polynomial
  which is of the form

  $$\begin{array}{ccl}Q&:=&(x^2-1)(x-\beta )(x^{d-3}+e_1x^{d-4}+\cdots +e_{d-3})
    \\ \\
    &=&(x^2-1)(x^{d-2}+(e_1-\beta )x^{d-3}+(e_2-\beta e_1)x^{d-4}+
    (e_3-\beta e_2)x^{d-5}+\cdots \\ \\ &&+(e_{d-5}-\beta e_{d-6})x^3+
    (e_{d-4}-\beta e_{d-5})x^2+(e_{d-3}-\beta e_{d-4})x-\beta e_{d-3})~.
  \end{array}$$
  Hence one of the moduli of negative roots of the polynomial $Q$ equals $1$
  and we suppose that exactly four of its moduli of negative roots are smaller
  than $1$. The second factor in the right-hand side
  defines the sign pattern $\Sigma _{d-3,2}$.
  Indeed, this factor has exactly one positive root, so its sign pattern is
  of the form $\Sigma _{d-1-\nu ,\nu}$, $0<\nu <d-1$. Then the coefficient of
  $x^{\nu +1}$ of $Q$
  is negative, so $\nu +1\leq 3$, i.~e. $\nu \leq 2$. For $\nu =1$,
  the coefficient of $x$ is negative, so $\nu =2$. Thus
  by setting $Q:=\sum _{j=0}^dc_jx^j$, one obtains the conditions

  \begin{equation}\label{eq4cond}
    \begin{array}{ll}e_{d-4}>\beta e_{d-5}~,&e_{d-3}<\beta e_{d-4}\\ \\
      c_4:=e_{d-4}-\beta e_{d-5}-e_{d-6}+\beta e_{d-7}>0~,&
      c_1:=-e_{d-3}+\beta e_{d-4}>0~.
  \end{array}\end{equation}
  The fourth of conditions (\ref{eq4cond}) and the inequality

  $$c_3:=(e_{d-3}-\beta e_{d-4})-(e_{d-5}-\beta e_{d-6})<0$$
  are
  corollaries of the second of conditions (\ref{eq4cond}); for $c_3<0$,
  one has to use also $e_{d-5}-\beta e_{d-6}>0$
  which results from the sign pattern
  $\Sigma _{d-3,2}$. We denote by

  $$r_1>\cdots >r_{d-3}$$
  the moduli of negative roots different from $-1$. Hence we suppose that
  $r_{d-7}>1>r_{d-6}$. We set

  $$\begin{array}{ccllc}
    E_k&:=&\sum _{1\leq i_1<\cdots <i_k\leq d-7}r_{i_1}\cdots r_{i_k}~,&1\leq k\leq d-7&
    {\rm and}\\ \\ 
    e_s'&:=&\sum _{d-6\leq p_1<\cdots <p_s\leq d-3}r_{p_1}\cdots r_{p_s}~,&1\leq s\leq 4~.
  \end{array}$$
  We set $E_k:=0$ for $k\leq 0$ and $k\geq d-6$, and $e_s':=0$ for $s\leq 0$
  and $s\geq 5$. Using this notation one can write

  $$\begin{array}{cclc}
    e_{d-6}&=&E_{d-7}e_1'+E_{d-8}e_2'+E_{d-9}e_3'+E_{d-10}e_4'&{\rm and}\\ \\
    e_{d-4}&=&E_{d-7}e_3'+E_{d-8}e_4'~.&\end{array}$$
  It is clear that $e_1'>e_3'$, because these are elementary symmetric
  polynomials having the same number (namely 4) of terms and their arguments
  are in the interval $(0,1)$. It is also evident that $e_2'>e_4'$ (their
  numbers of terms are $6$ and $1$), $e_3'>0$ and $e_4'>0$. Therefore
  $e_{d-6}>e_{d-4}$. If
  $e_{d-5}\geq e_{d-7}$, then (see (\ref{eq4cond})) $c_4<0$ which
  contradicts the sign pattern of $Q$. So one has
  $e_{d-5}<e_{d-7}$ and conditions (\ref{eq4cond}) imply

  $$e_{d-4}/e_{d-5}~>~\beta >(e_{d-6}-e_{d-4})/(e_{d-7}-e_{d-5})~,~~~\,
  {\rm i.~e.}~~~\, e_{d-4}e_{d-7}~>~e_{d-5}e_{d-6}~.$$
  The left and right symmetric polynomials have

  $$h_1:={d\choose d-4}{d\choose d-7}~~~\, \, 
  {\rm and}~~~\, \, h_2:={d\choose d-5}{d\choose d-6}$$
  terms respectively,
  where

  $$h_2/h_1=h_*:=7(d-4)/5(d-6)>1~,$$
  i.~e. $e_{d-5}e_{d-6}$ has more terms than $e_{d-4}e_{d-7}$. We show that one has
  $e_{d-4}e_{d-7}<e_{d-5}e_{d-6}$ which contradiction implies that the inequality
  $w\leq m-4$ is impossible.

  We set $S_j:=e_j/{d\choose j}$ and then use Newton's inequalities
  $S_j^2\geq S_{j-1}S_{j+1}$ for $j=d-5$ and $j=d-6$. Thus

  $$S_{d-5}^2S_{d-6}^2~\geq ~S_{d-4}S_{d-6}S_{d-5}S_{d-7}~,~~~\, {\rm i.~e.}~~~\,
  S_{d-5}S_{d-6}~\geq ~S_{d-4}S_{d-7}~,~~~\, {\rm so}$$

  $$e_{d-5}e_{d-6}~\geq ~h_* e_{d-4}e_{d-7}~>~e_{d-4}e_{d-7}~.$$
  The triples $(u,v,w)$ for which $w\geq m-3$, are all the $15$ triples with
  $0\leq u+v\leq 4$.  
  \vspace{1mm}
  
  Part (2). For $d=6$, the proof can be found in \cite{Kodeg6}. Suppose that
  $d\geq 6$ and that the degree $d$ polynomial $Q$ realizes
  one of the $11$ triples $(u,v,w)$ of
  part (2) of the theorem. Then for $\varepsilon >0$ small enough, the
  polynomial $(1+\varepsilon x)Q$ realizes the triple $(u,v,w+1)$
  for degree $d+1$.
  \vspace{1mm}

  Part (3). 
  Suppose that the polynomial $\tilde{Q}$ realizes one of the triples
  $(4,0,m-3)$, $(3,1,m-3)$, $(2,2,m-3)$ or $(1,3,m-3)$ with the
  sign pattern $\Sigma _{m,2,2}$. Then $\gamma _1<\beta <1=\alpha$. Set
  $A:=(x-\beta )(x+\gamma _1)$ and 

  $$\sum _{k=0}^d\tilde{q}_kx^k=:\tilde{Q}:=AU~,~~~\,
  \sum _{i=0}^{d-2}u_ix^i=:U:=(x-1)
  \prod_{j=2}^{d-2}(x+\gamma _j)~.$$
 The sign pattern of the product $A$ is $(+,-,-)$. The one of $U$ is of the
  form $\Sigma _{\mu ,\nu }$. As
  $\tilde{q}_{\nu +1}=-\beta \gamma _1u_{\nu +1}-(\beta -\gamma _1)u_{\nu}+
  u_{\nu -1}<0$, one must have $\nu +1\leq 3$, i.~e. $\nu =1$ or~$2$.
  For both cases Theorem~1 of \cite{KoPuMaDe} states that the polynomial $U$
  has $\leq 2$ negative roots with 
  moduli smaller than $1$. However one obtains that
  $\gamma _i \leq 1$ for $i=2$, $3$ and~$4$. One perturbs then these roots
  to obtain $\gamma _i<1$ (without changing the signs of the coefficients
  of~$U$) which brings a contradiction with \cite[Theorem~1]{KoPuMaDe}.
\end{proof}

\section{Proof of Theorem~\protect\ref{tmS2n2}\protect\label{secprtmS2n2}}

\subsection{Proof of part (1)}

In the proof of part (1) we use {\em concatenation} of sign patterns and of
couples. The following lemma follows directly from \cite[Lemma~14]{FoKoSh}.

\begin{lm}\label{lmconcat}
Suppose that for the
monic polynomials $P_1$ and $P_2$ of degrees $d_1$ and $d_2$, one has
$\sigma (P_i)=(+,\sigma _i)$, $i=1$,~$2$, where $\sigma _i$
denote what remains of the sign patterns when the initial sign $+$ is deleted.
We set $P^{\dagger}:=\varepsilon ^{d_2}P_1(x)P_2(x/\varepsilon )$.
Then for $\varepsilon >0$
small enough,

$$\sigma (P^{\dagger}):=\left\{
\begin{array}{ll}(+,\sigma _1,\sigma _2)&{\rm if~the~last~position~of~}
  \sigma _1~{\rm is}~+~,\\ \\
  (+,\sigma _1,-\sigma _2)&{\rm if~the~last~position~of~}
  \sigma _1~{\rm is}~-~.\end{array}\right.$$
Here $-\sigma _2$ is obtained from $\sigma _2$
by changing each $+$ by $-$ and vice versa.
\end{lm}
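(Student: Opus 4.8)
The plan is to prove the lemma directly, by tracking which monomial of $P^{\dagger}$ survives the rescaling limit $\varepsilon \to 0^+$, rather than by unfolding the cited reference. Write $P_1(x) = \sum_{j=0}^{d_1} a_j x^j$ and $P_2(x) = \sum_{k=0}^{d_2} b_k x^k$, with all $a_j$ and $b_k$ nonzero and $a_{d_1} = b_{d_2} = 1$ by monicity. Substituting and rescaling gives $\varepsilon^{d_2} P_2(x/\varepsilon) = \sum_{k=0}^{d_2} b_k \varepsilon^{d_2-k} x^k$, so the coefficient of $x^{\ell}$ in $P^{\dagger} = P_1(x)\cdot \varepsilon^{d_2}P_2(x/\varepsilon)$ is $c_{\ell}(\varepsilon) = \sum_{j+k=\ell} a_j b_k \varepsilon^{d_2-k}$. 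This is a polynomial in $\varepsilon$ in which each admissible index $k$ (running over $\max(0,\ell-d_1) \le k \le \min(\ell,d_2)$) contributes a single term, so the term of lowest order in $\varepsilon$, namely the one with the largest $k$, is unique and governs the sign of $c_{\ell}$ for small $\varepsilon$.

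First I would treat the high-degree range $d_2 \le \ell \le d_1 + d_2$. Here the largest admissible $k$ is $k = d_2$, with $j = \ell - d_2 \in \{0,\ldots,d_1\}$ and $\varepsilon$-exponent $0$, so $c_{\ell}(\varepsilon) = a_{\ell-d_2} + O(\varepsilon)$ and hence $\mathrm{sgn}(c_{\ell}) = \mathrm{sgn}(a_{\ell-d_2})$ once $\varepsilon$ is small. Reading these $d_1+1$ signs from $\ell = d_1+d_2$ down to $\ell = d_2$ reproduces exactly $\sigma(P_1) = (+,\sigma_1)$, which is the claimed leading block of $\sigma(P^{\dagger})$.

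Next I would treat the low-degree range $0 \le \ell < d_2$. Now the largest admissible $k$ is $k = \ell$, forcing $j = 0$, with $\varepsilon$-exponent $d_2-\ell > 0$, giving $c_{\ell}(\varepsilon) = a_0 b_{\ell}\,\varepsilon^{d_2-\ell}\bigl(1 + O(\varepsilon)\bigr)$ and therefore $\mathrm{sgn}(c_{\ell}) = \mathrm{sgn}(a_0)\,\mathrm{sgn}(b_{\ell})$. Since $\mathrm{sgn}(a_0)$ is precisely the last entry of $\sigma_1$, reading these $d_2$ signs from $\ell = d_2-1$ down to $\ell = 0$ yields $\sigma_2$ when that last entry is $+$ and $-\sigma_2$ when it is $-$, which is the asserted trailing block. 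As there are only finitely many coefficients, a single threshold $\varepsilon_0 > 0$ can be chosen below which all of these sign determinations hold at once; the split at $\ell = d_2$ is clean because that one coefficient satisfies $\mathrm{sgn}(c_{d_2}) = \mathrm{sgn}(a_0)$ and is counted in the leading block.

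I do not expect a genuine obstacle: the entire content is the bookkeeping above. The single point I would state carefully is why the lowest-order term controls the sign, and this is immediate because its coefficient ($a_{\ell-d_2}$, respectively $a_0 b_{\ell}$) is a product of nonzero reals, hence nonzero, so the remaining terms—of strictly higher order in $\varepsilon$—are dominated for $\varepsilon$ small. Collecting the two ranges then gives exactly the two cases in the statement.
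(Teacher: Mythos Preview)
Your proof is correct. The paper does not actually give its own argument for this lemma; it simply states that the result ``follows directly from \cite[Lemma~14]{FoKoSh}''. Your self-contained computation---identifying, for each coefficient $c_\ell(\varepsilon)$ of $P^\dagger$, the unique term of lowest order in $\varepsilon$ and reading off its sign---is exactly the natural way to establish the claim and is presumably what the cited lemma encodes. The only thing you supply beyond the paper is the explicit bookkeeping; there is no substantive methodological difference to compare.
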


\begin{rem}
  {\rm We use the symbol $\ast$ to denote concatenation of couples or of
    sign patterns. We denote by $\Sigma _{m_1,\ldots ,m_s}$ the sign pattern
    beginning with $m_1$ signs $+$ followed by $m_2$ signs $-$
    followed by $m_3$ signs $+$ etc., so one can write}
  $$\Sigma _{m_1,\ldots ,m_{s-1},m_s+n_1-1,n_2,\ldots ,n_{\ell}}=
  \Sigma _{m_1,\ldots ,m_s}\ast \Sigma _{n_1,\ldots ,n_{\ell}}~.$$
         {\rm When necessary we use more than two consecutive concatenations.
           If $\varepsilon$ is small enough, the moduli of all roots of
           $P_2(x/\varepsilon )$ are smaller than the moduli of all roots of
           $P_1(x)$ which allows to deduce the order of the moduli of
         roots of $P^{\dagger}$.}
\end{rem}

To prove part (1) one takes into account the fact
that the sign pattern $\Sigma _{2,2}$
is realizable with each of the
orders $(0,2)$, $(1,1)$ and $(2,0)$, see \cite[Part~(3) of Example~2]{KoPuMaDe}.
Hence the cases $(\Sigma _{2,n,2},(u,v,w))$ with $u\leq 2$, $w\leq 2$ are
realizable by the triple concatenation

$$(\Sigma _{2,2},(u,2-u))\ast (\Sigma_{n-2},(n-3))\ast (\Sigma _{2,2},(2-w,w))~.$$ 
The concatenation factor in the middle is realizable by any hyperbolic
polynomial with all coefficients positive (and with $n-3$ negative roots).

\subsection{Plan of the proof of part (2)}

We deduce part (2) from two propositions: 

\begin{prop}\label{propfirst}
  Suppose that $n\geq 4$. Then:
  \vspace{1mm}
  
  (1) All couples $(\Sigma _{2,n,2},(u,v,w))$ with either
  $w\geq 5$ or with $w=4$ and $v\geq 1$, are non-realizable.

  (2) All couples $(\Sigma _{2,n,2},(0,n-2,3))$ are non-realizable.
  \end{prop}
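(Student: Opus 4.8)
The proof follows the scheme of the proof of Theorem~\ref{tmSm22}. Normalize $\alpha=1$ by a linear change of the variable~$x$; a polynomial realizing $(\Sigma_{2,n,2},(u,v,w))$ then has roots $\beta<1$ and $-\gamma_1,\dots,-\gamma_{n+1}$ with $\gamma_1<\cdots<\gamma_{n+1}$, $\gamma_u<\beta<\gamma_{u+1}$ and $\gamma_{u+v}<1<\gamma_{u+v+1}$ (where $\gamma_0:=0$, $\gamma_{n+2}:=+\infty$). By \cite[Theorem~2]{KoAnn} the set of monic hyperbolic polynomials realizing $\Sigma_{2,n,2}$ is open and connected, and by part~(2) of Remarks~\ref{remscanon} the canonical order $(1,n-1,1)$, for which $w=1$, is realizable. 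Hence, were a couple with $w\ge5$ (resp.\ with $w=4$ and $v\ge1$, resp.\ the couple $(0,n-2,3)$) realizable, then moving along a path inside this connected set from a polynomial realizing it to one realizing the canonical order and perturbing the path so that equalities among moduli occur one at a time, one would reach a monic hyperbolic polynomial $Q_0$ still defining $\Sigma_{2,n,2}$ at which some $\gamma_j$ equals $\alpha=1$. Thus $(x^2-1)\mid Q_0$, say $Q_0=(x^2-1)(x-\beta)P(x)$ with $P$ monic of degree $n$ with all roots negative; for $w\ge5$ exactly four of the moduli of the roots of $P$ exceed~$1$, and for the two borderline cases the number of such moduli together with the position of $\beta$ among them is read off in the same way. (For the borderline cases one may alternatively dispense with the normalization and argue as in the proof of Theorem~\ref{tmSm22}(3), pulling out $x-\beta$, or $(x-\beta)(x+\gamma_1)$ when $u\ge1$.)

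The next step analyses the cofactor $G:=(x-\beta)P$, which has exactly one positive root, so $\sigma(G)=\Sigma_{\mu,\nu}$ with $\mu+\nu=n+2$. Writing $Q_0=(x^2-1)G=\sum_kc_kx^k$ one has $c_k=g_{k-2}-g_k$, whence $c_{n+3}=g_{n+1}=1$, $c_{n+2}=g_n$, $c_1=-g_1$, $c_0=-g_0$, and $c_k=g_{k-2}-g_k$ for $2\le k\le n+1$; since $\sigma(Q_0)=\Sigma_{2,n,2}$ this forces $g_n>0$, $g_0,g_1<0$ (so $2\le\nu\le n$) and the chains $g_0<g_2<g_4<\cdots$ and $g_1<g_3<g_5<\cdots$ up to index $n+1$. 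The plan is then to combine, exactly as in the last part of the proof of Theorem~\ref{tmSm22}(1): the complete answer to Problem~\ref{prob2} in the case $\tilde c=1$, namely \cite[Theorem~1]{KoPuMaDe}, applied to $G$, which bounds how many negative roots of $G$ can have modulus below its positive root~$\beta$ once $\Sigma_{\mu,\nu}$ is known; Newton's inequalities $S_j^2\ge S_{j-1}S_{j+1}$ with $S_j:=e_j/\binom nj$ for the elementary symmetric functions $e_j$ of the moduli of the roots of~$P$; and the chains above. The hypotheses $v\ge1$ in part~(1) and $u=0$ in part~(2) enter exactly here: they fix the position of $\beta$ inside $\gamma_1<\cdots<\gamma_{n+1}$ and so pin down $\nu$ (or enough about it), after which either the number of moduli of $P$ below~$1$ exceeds the bound that \cite[Theorem~1]{KoPuMaDe} allows for $\Sigma_{\mu,\nu}$, or the chains together with Newton's inequalities produce a relation among the $e_j$ incompatible with Newton's inequalities, as in the final display of the proof of Theorem~\ref{tmSm22}(1). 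For part~(2) one may also argue directly: if $(\Sigma_{2,n,2},(0,n-2,3))$ were realizable, then, $\beta$ being the smallest of all the moduli, pulling out $x-\beta$ exhibits a polynomial of degree $n+2$ with one positive root realizing $(\Sigma_{\mu',\nu'},(n-2,3))$, while the sign restrictions on the coefficients of $Q$ make $\nu'$ too small for \cite[Theorem~1]{KoPuMaDe} to allow that couple.

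The main obstacle is this last computation. In Theorem~\ref{tmSm22}(1) the sign pattern $\Sigma_{n,2,2}$ has only the two negative coefficients $c_2,c_3$, so four coefficient conditions pin everything down and a single inequality cubic in the roots of $P$ already contradicts Newton's inequalities. For $\Sigma_{2,n,2}$, by contrast, \emph{all} of $c_2,\dots,c_{n+1}$ are negative, and one checks that neither the extreme negative coefficients $c_2,c_{n+1}$ together with $c_1,c_{n+2}$, nor any single pair of coefficients, is by itself in conflict with Newton's inequalities, the binomial ratios occurring being too favourable. The technical heart is therefore to carry the full chains $g_0<g_2<\cdots$ and $g_1<g_3<\cdots$ across the entire middle block and combine them with several consecutive Newton inequalities at once. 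I expect the cleanest bookkeeping to use, besides the normalized quantities $S_j$, the involution $i_r$ of Definition~\ref{defiZ2Z2}, which fixes $\Sigma_{2,n,2}$ and puts the ``top'' conditions at $c_{n+1},c_{n+2}$ and the ``bottom'' conditions at $c_1,c_2$ on the same footing. Among the three cases, $(0,n-2,3)$ --- the only candidate with $u=0$ and $w=3$, and the one the first-step reduction does not handle on its own --- is the borderline one and is treated last, via the extra factorization argument above.
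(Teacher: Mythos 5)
There is a genuine gap: your text is a proof \emph{plan} whose decisive step is never carried out, and you say so yourself. After the deformation to a polynomial $Q_0$ divisible by $x^2-1$ and the sign-pattern analysis of the cofactor $G=(x-\beta)P$, everything hinges on extracting a contradiction from \cite[Theorem~1]{KoPuMaDe}, Newton's inequalities and the chains $g_0<g_2<\cdots$, $g_1<g_3<\cdots$; but you then report that no single pair of coefficient conditions is in conflict with Newton's inequalities and that one would have to ``carry the full chains across the entire middle block'', which you do not do. Until that computation is exhibited, the proposition is not proved. The reduction itself is also shakier than in Theorem~\ref{tmSm22}(1): along a path from a polynomial with, say, $(u,v,w)=(0,n-2,3)$ to one realizing the canonical order $(1,n-1,1)$, the first degeneration need not be a collision $\gamma_j=\alpha$ --- it can just as well be $\beta$ crossing some $\gamma_j$, which changes $u$ and $v$ but produces no factor $x^2-1$; so ``the position of $\beta$ among them is read off in the same way'' does not hold without further argument, and this is precisely where the hypotheses $v\ge1$ and $u=0$ would have to be protected.

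For contrast, the paper's proof is far more direct and uses none of this machinery. Writing $q_{d-2}=\alpha\beta-(\alpha+\beta)e_1+e_2$, where $e_1,e_2$ are the elementary symmetric functions of the moduli $\gamma_1<\cdots<\gamma_{n+1}$ of the negative roots, one shows that the stated position of $\alpha$ and $\beta$ forces $(\alpha+\beta)e_1<e_2$, hence $q_{d-2}>0$, contradicting $\Sigma_{2,n,2}$. The point is that $w\ge4$ (or $w=3$ with $u=0$) supplies enough moduli $\gamma_j>\alpha>\beta$ that each product $\alpha\gamma_k$ or $\beta\gamma_k$ occurring in $(\alpha+\beta)e_1$ can be matched injectively with a strictly larger product $\gamma_i\gamma_j$ occurring in $e_2$ (for $w\ge5$ a counting argument on the top block of $w$ moduli suffices; for the borderline cases $w=4,\ v\ge1$ and $(0,n-2,3)$ an explicit pairing is written down). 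No deformation, no factorization by $x^2-1$, no Newton's inequalities, and no appeal to \cite[Theorem~1]{KoPuMaDe} is needed. If you want to salvage your route, you must either produce the ``middle block'' estimate you postpone, or switch to an argument of this direct kind.
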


The proposition is proved in Subsection~\ref{subsecprpropfirst}.

\begin{prop}\label{prop2n2}
  For $n\geq 4$, the following couples are not realizable:

  (1) $(\Sigma _{2,n,2}, (u,v,4))$, $u+v=n-3$;
  
  (2) $(\Sigma _{2,n,2}, (u,v,3))$, $u+v=n-2$, $u>0$.
  \end{prop}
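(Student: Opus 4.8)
The plan is to mimic the strategy of Part (3) of Theorem~\ref{tmSm22}: extract a low‑degree factor carrying the two positive roots together with the smallest moduli of negative roots, and reduce to the already‑settled case $\tilde c=1$ via \cite[Theorem~5]{KoPuMaDe}. Concretely, suppose a degree $d=n+3$ polynomial $\tilde Q$ realizes one of the listed couples with sign pattern $\Sigma_{2,n,2}$. Normalize $\alpha:=1$. In case (1) we have $w=4$ and $u+v=n-3$, so exactly four moduli of negative roots, say $\gamma_{d-5}<\gamma_{d-4}<\gamma_{d-3}<\gamma_{d-2}$, exceed $\alpha=1$, while in case (2) we have $w=3$, $u+v=n-2$, $u>0$, so $\gamma_1<\beta<1=\alpha$ and exactly three $\gamma_j$ exceed $1$. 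Write $\tilde Q=AU$ where $A:=(x-\beta)(x-1)$ (in case (1)) or $A:=(x-\beta)(x+\gamma_1)(x-1)$ (in case (2)), and $U$ is the product of the remaining linear factors $(x+\gamma_j)$ over the negative roots not absorbed into $A$.

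**Controlling the sign pattern of $U$.** The factor $U$ is a hyperbolic polynomial with only negative roots, hence $\sigma(U)=(+,+,\dots,+)$, and $A$ has a sign pattern of the form $(+,-,\dots)$ with the signs dictated by whether $\beta,\gamma_1$ are absorbed. Following the computation in Part~(3) of Theorem~\ref{tmSm22}: writing $\tilde Q=\sum\tilde q_kx^k$ and $U=\sum u_ix^i$ with $\sigma(U)=\Sigma_{\mu,\nu}$ for the appropriate $U$ (when $U$ still carries one positive root — which happens when we put fewer factors into $A$; alternatively we arrange $A$ to carry exactly the positive roots and $U$ to be all‑negative), the position of the first negative coefficient of $\tilde Q$ forces $\nu\le 2$ by inspecting the coefficient $\tilde q_{\nu+1}$, exactly as in the proof of Theorem~\ref{tmSm22}(3). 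Hence \cite[Theorem~1]{KoPuMaDe} (equivalently \cite[Theorem~5]{KoPuMaDe} for $\tilde c=1$) applies to $U$ and bounds the number of moduli of negative roots of $U$ that are smaller than $1$ by at most $2$.

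**Deriving the contradiction.** The counting is where the hypotheses $u+v=n-3$ (case 1) and $u+v=n-2$, $u>0$ (case 2) enter. In case (1), since $w=4$, \emph{all} of $\gamma_1,\dots,\gamma_{d-6}$ (there are $u+v=n-3=d-6$ of them) are $<1$; once $\beta$ and possibly $\gamma_1$ are pulled into $A$, the factor $U$ still retains $\ge 3$ negative roots with modulus $<1$ (for $n\ge 4$, $d-6\ge 1$, and the arithmetic gives at least $3$), contradicting the bound of $2$. One must also handle the boundary moduli $\gamma_i=1$ by the same perturbation trick used in Theorem~\ref{tmSm22}(3): perturb the roots equal to $1$ slightly below $1$ without changing any sign of the coefficients of $U$, producing a genuine realization that violates \cite[Theorem~1]{KoPuMaDe}. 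Case (2) is handled identically: $w=3$ and $u+v=n-2=d-5$ with $u>0$ means $\gamma_1<\beta$, so absorbing $(x-\beta)(x+\gamma_1)$ into $A$ still leaves $U$ with $\ge 3$ negative roots of modulus $<1$ (using $u>0$ to guarantee $\gamma_1<\beta<1$ so the absorption is legitimate and $d-5-1\ge$ the needed count for $n\ge 4$), again contradicting the bound of $2$.

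**Main obstacle.** The delicate point is verifying the exact sign‑pattern bookkeeping for $U$ — i.e.\ that after removing the chosen linear factors the polynomial $U$ genuinely has sign pattern $\Sigma_{\mu,\nu}$ with $\nu\le 2$, so that \cite[Theorem~1]{KoPuMaDe} is applicable — and making sure the arithmetic of "how many $\gamma_j<1$ survive into $U$" strictly exceeds $2$ under the stated constraints on $(u,v,w)$ for all $n\ge4$, including the edge cases where $n=4$. The perturbation argument for boundary moduli equal to $1$ is routine but must be stated, since the counting inequalities are sharp enough that a modulus sitting exactly at $1$ would otherwise leave a loophole.
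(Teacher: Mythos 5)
Your reduction to \cite[Theorem~1]{KoPuMaDe} breaks down at the step where you claim that the factor $U$ has sign pattern $\Sigma_{\mu,\nu}$ with $\nu\le 2$ ``exactly as in the proof of Theorem~\ref{tmSm22}(3)''. In that proof the bound $\nu+1\le 3$ came from the fact that the ambient sign pattern was $\Sigma_{m,2,2}$, whose only negative coefficients are $\tilde q_2$ and $\tilde q_3$; since $\tilde q_{\nu+1}<0$, the index $\nu+1$ must land in that short negative block. For $\Sigma_{2,n,2}$ the negative coefficients occupy positions $2,3,\dots,n+1$, so the same computation only yields $\nu\le n$, which is vacuous, and there is no other reason for $\nu$ to be small. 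The counting also fails: in your case (2) with $n=4$ one has $u+v=n-2=2$, so after absorbing $-\gamma_1$ into $A$ the factor $U$ retains at most \emph{one} negative root of modulus smaller than $1$, nowhere near the three you need for a contradiction. Most tellingly, if the argument were sound it would apply verbatim to the couples $(\Sigma_{2,n,2},(2,n-3,2))$, where $u+v=n-1$ moduli lie below $\alpha$ and $U$ would retain $n-2\ge 3$ of them for $n\ge 5$; this would ``prove'' their non-realizability, contradicting part (1) of Theorem~\ref{tmS2n2}. So the approach proves too much and cannot be repaired by bookkeeping alone.

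The paper's actual proof has a different structure: it is an induction on $n$ whose base consists of the three degree-$7$ statements (Propositions~\ref{prop401}, \ref{prop302} and \ref{prop311} for part (2), plus Proposition~\ref{prop401} for part (1)), each established by a substantial separate analysis (extremal arguments in $\alpha,\beta$, discriminant computations, one-parameter deformations). The induction step observes that the non-realizability for a given $n$ amounts to the impossibility of having $q_2<0$ and $q_{d-2}<0$ simultaneously for moduli in the prescribed order, and that appending a new negative root $-\gamma$ increases both $q_{d-2}$ (by $\gamma(E_1-\alpha-\beta)>0$) and the normalized $q_2$ (by $(1/\gamma)(G_1-1/\alpha-1/\beta)>0$), so the impossibility persists from $n$ to $n+1$. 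A soft factorization argument cannot substitute for the degree-$7$ base cases, which is where the real content of the proposition lies.
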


Part (2) of Proposition~\ref{propfirst} and part (2) of
Proposition~\ref{prop2n2} settle the case $w=3$ while the first parts of these
propositions resolve the case $w\geq 4$. Proposition~\ref{prop2n2}
is proved in Subsection~\ref{subsecprprop2n2}. In its proof three other
propositions are used which are formulated below and proved in
Subsections~\ref{subsecprprop401}, \ref{subsecprprop302}
and \ref{subsecprprop311} respectively.

\begin{prop}\label{prop401}
  The couple $(\Sigma _{2,4,2},(1,0,4))$ is not realizable.
\end{prop}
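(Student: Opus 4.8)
The plan is to mimic the factorization argument from the proof of part~(3) of Theorem~\ref{tmSm22}, but to extract the contradiction from two specific coefficients of $Q$ rather than from a root count of a factor. Here $d=7$, so a polynomial realizing $(\Sigma_{2,4,2},(1,0,4))$ has positive roots $\beta<\alpha$ and negative roots $-\gamma_1,\dots,-\gamma_5$ with $\gamma_1<\beta<\alpha<\gamma_2<\gamma_3<\gamma_4<\gamma_5$. Suppose such a $Q$ exists; after a linear change of $x$ I set $\alpha=1$ and factor $Q=AU$, where
$$A:=(x-\beta)(x+\gamma_1)=x^2-sx-t,\quad s:=\beta-\gamma_1>0,\ t:=\beta\gamma_1>0,\qquad U:=(x-1)\prod_{j=2}^{5}(x+\gamma_j).$$
As in the proof of Theorem~\ref{tmSm22} one has $\sigma(A)=(+,-,-)$ because $\gamma_1<\beta$; and since $A$ is monic with roots $-\gamma_1<\beta$ while $1>0>-\gamma_1$, the inequality $\beta<1$ is equivalent to $A(1)=1-s-t>0$, i.e. to $s+t<1$ (with $s,t>0$).

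Writing $\prod_{j=2}^{5}(x+\gamma_j)=x^4+E_1x^3+E_2x^2+E_3x+E_4$ with $E_k=e_k(\gamma_2,\gamma_3,\gamma_4,\gamma_5)$, one gets $U=\sum_{i=0}^{5}u_ix^i$ with $u_5=1$, $u_4=E_1-1$, $u_3=E_2-E_1$, $u_2=E_3-E_2$, $u_1=E_4-E_3$, $u_0=-E_4$, and hence $Q=\sum_{k=0}^{7}\tilde q_kx^k$ with $\tilde q_k=u_{k-2}-su_{k-1}-tu_k$; in particular
$$\tilde q_5=(E_2-E_1)-s(E_1-1)-t,\qquad\qquad \tilde q_2=-E_4-s(E_4-E_3)-t(E_3-E_2).$$
Since $\sigma(Q)=\Sigma_{2,4,2}=(+,+,-,-,-,-,+,+)$, both $\tilde q_5$ and $\tilde q_2$ must be negative. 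The condition $\gamma_j>1$ for $j=2,\dots,5$ says exactly that $\prod_{j=2}^{5}\bigl(x+(\gamma_j-1)\bigr)$ has positive coefficients, i.e.
$$E_1-4>0,\quad E_2-3E_1+6>0,\quad E_3-2E_2+3E_1-4>0,\quad E_4-E_3+E_2-E_1+1>0,$$
and of course the $E_k$ obey Newton's inequalities $E_2\le\tfrac38E_1^2$, $E_3\le\tfrac{4E_2^2}{9E_1}$, $E_4\le\tfrac{3E_3^2}{8E_2}$.

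The heart of the proof is to show that $\tilde q_5<0$ and $\tilde q_2<0$ cannot both hold. This is transparent at the limiting (inadmissible) configuration $\gamma_2=\gamma_3=\gamma_4=\gamma_5=1$, where $U=x^5+3x^4+2x^3-2x^2-3x-1$, so $\tilde q_5=2-3s-t$ and $\tilde q_2=-1+3s+2t$; then $\tilde q_5<0$ and $\tilde q_2<0$ read $3s+t>2$ and $3s+2t<1$, whose difference gives $t<-1$, contradicting $t>0$. For general $(\gamma_2,\dots,\gamma_5)$ I would proceed as follows: from $s,t>0$ and $s+t<1$ one has $s(E_1-1)+t<E_1-1$, so $\tilde q_5<0$ already forces $E_2-E_1<E_1-1$, which confines the $\gamma_j$ to a bounded region; then, using the inequality $\tilde q_5<0$ to bound $s$ from below and substituting into $-\tilde q_2=E_4+s(E_4-E_3)+t(E_3-E_2)$, one should be able to conclude $-\tilde q_2>0$ with the help of the four sign conditions on $(E_1,\dots,E_4)$ and the Newton inequalities above.

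The main obstacle is precisely this last step. Away from the limiting configuration the signs of $u_2=E_3-E_2$ and $u_1=E_4-E_3$ are not determined — one can choose the $\gamma_j$ with $E_3>E_2$ or $E_3<E_2$, and likewise for $E_4,E_3$ — so $\tilde q_2<0$ cannot be dismissed term by term; one genuinely has to use that $\tilde q_5<0$ pins the $\gamma_j$ down enough. I expect that after eliminating $s$ by means of $\tilde q_5<0$ the claim reduces to a polynomial inequality in $E_1,E_2,E_3,E_4$ and $t$, to be settled by Newton's inequalities and the ``roots $<-1$'' conditions; that is the computation I would have to push through. (Once $(\Sigma_{2,4,2},(1,0,4))$ is ruled out, its image $(\Sigma_{2,4,2},(4,0,1))$ under $i_r$ is ruled out as well.)
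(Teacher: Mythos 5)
Your setup is sound: the normalization $\alpha=1$, the factorization $Q=(x^2-sx-t)U$, the formulas for $\tilde{q}_2$ and $\tilde{q}_5$, the encoding of $\gamma_1<\beta<\alpha=1$ as $s>0$, $t>0$, $s+t<1$, the four sign conditions expressing $\gamma_j>1$, and the computation at the boundary configuration $\gamma_2=\cdots=\gamma_5=1$ all check out. But the proposition \emph{is} the claim that $\tilde{q}_2<0$ and $\tilde{q}_5<0$ are incompatible on the whole admissible region, and that is exactly the step you do not carry out --- you say yourself it is ``the computation I would have to push through.'' Verifying incompatibility at a single boundary point proves nothing about the interior, and your proposed route (replace ``the $\gamma_j$ are real and $>1$'' by the four necessary sign conditions on $E_1,\dots,E_4$ plus Newton's inequalities, then settle a polynomial inequality in $E_1,\dots,E_4,s,t$) is a plan rather than an argument; it is not even clear that those necessary conditions retain enough information to force the contradiction. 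So there is a genuine gap, and it sits precisely where all the work of the proof lies.

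For what it is worth, your partial observations do point in a workable direction: from $s+t<1$ and $\tilde{q}_5<0$ you get $E_2<2E_1-1$, which combined with $E_2>3E_1-6$ gives $4<E_1<5$ and confines each $\gamma_j$ to $(1,2)$; bounds of exactly this flavour appear in the paper's arguments. The paper closes the gap differently, however: it normalizes the smallest of the four large negative moduli to $1$ (not $\alpha$), observes that for fixed $\alpha+\beta$ both relevant coefficient expressions decrease when $\alpha\beta$ decreases, and thereby reduces to the two extremal configurations $\beta=y$ (your $s=0$) and $\alpha=\gamma_2$ (your $\gamma_2=1$); the first is killed by the elementary estimate $E_2\geq E_1+2$ versus $E_2<\alpha E_1+\beta^2\leq E_1+1$, and the second requires a further case split handled with the AM--GM and AM--HM inequalities. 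Some substitute for that reduction in the $(\alpha,\beta)$ (equivalently $(s,t)$) variables, or an actual execution of your elimination of $s$, would be needed before your proof could be accepted.
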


\begin{prop}\label{prop302}
  The couple $(\Sigma _{2,4,2},(2,0,3))$ is not realizable.
\end{prop}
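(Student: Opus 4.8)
The plan is to argue by contradiction. Assume such a $Q$ exists; after a dilation of $x$ we may take $\alpha=1$, so that
\[
Q=(x-\beta)(x-1)\prod_{i=1}^{5}(x+\gamma_i),\qquad 0<\gamma_1<\gamma_2<\beta<1<\gamma_3<\gamma_4<\gamma_5 .
\]
Set $e_k:=e_k(\gamma_1,\dots ,\gamma_5)$ for $0\le k\le 5$ (with $e_0:=1$) and $D_k:=e_k-e_{k-1}$, and expand $Q=\bigl(x^2-(1+\beta)x+\beta\bigr)\sum_{k=0}^{5}e_kx^{5-k}$. The condition $\sigma(Q)=\Sigma_{2,4,2}$ consists of eight sign requirements on the coefficients of $Q$. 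Those on the coefficients of $x^7$ and of $x^0$ are vacuous; the coefficient of $x^6$ equals $e_1-(1+\beta)$ and is positive because $e_1=\sum\gamma_i>3$; the coefficient of $x^1$ equals $\beta e_4-(1+\beta)e_5$ and is positive because $\beta(\gamma_1+\gamma_2)>(1+\beta)\gamma_1\gamma_2$ (use $\gamma_1<\beta<1$ and $\gamma_2<1$), so in particular $e_5<e_4$. The four remaining requirements — that the coefficients of $x^5,x^4,x^3,x^2$ be negative — amount precisely to
\[
D_{k+1}<\beta D_k\qquad(k=1,2,3,4).
\]

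From these four inequalities I would extract the shape of the sequence $(D_k)$. Since $0<\beta<1$, none of them can hold with $D_k\le 0<D_{k+1}$, so $(D_1,\dots ,D_5)$ stays $\le 0$ once it is $\le 0$; and $D_5<0$. Moreover $D_1=e_1-1>0$, and splitting the $\gamma_i$ according to whether they lie below or above $1$ gives
\[
D_2=e_2-e_1=\bigl(\gamma_3(\gamma_4-1)+\gamma_4(\gamma_5-1)+\gamma_5(\gamma_3-1)\bigr)+(\Sigma-1)(\gamma_1+\gamma_2)+\gamma_1\gamma_2>0,\qquad \Sigma:=\gamma_3+\gamma_4+\gamma_5 .
\]
Hence there is $j\in\{2,3,4\}$ with $D_1,\dots ,D_j>0>D_{j+1},\dots ,D_5$. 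The value $j=4$ is impossible: it forces $e_4>e_3$, hence $2e_4/e_3>2$, and Newton's inequalities $S_k^2\ge S_{k-1}S_{k+1}$ for $S_k:=e_k/\binom{5}{k}$ (used exactly as in the proof of Theorem~\ref{tmSm22}) yield $e_1/5\ge e_2/(2e_1)\ge e_3/e_2\ge 2e_4/e_3>2$, whence $e_2>4e_1$ and $D_2>3e_1$, contradicting $D_2<\beta(e_1-1)<e_1$. So $j\in\{2,3\}$, and in either case $D_4<0$.

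Consequently $D_2<\beta D_1$ reads $\beta>(e_2-e_1)/(e_1-1)$, and $D_5<\beta D_4$ (with $D_4<0$) reads $\beta<(e_4-e_5)/(e_3-e_4)$. These two bounds on $\beta$ are incompatible as soon as one establishes the $\gamma$-only inequality
\[
(e_2-e_1)(e_3-e_4)\ \ge\ (e_1-1)(e_4-e_5)\qquad\text{whenever }0<\gamma_1<\gamma_2<1<\gamma_3<\gamma_4<\gamma_5 ,
\]
and this is the heart of the argument. To prove it I would set $S:=\gamma_1+\gamma_2$, $P:=\gamma_1\gamma_2$ and let $\Sigma,\Pi,\Gamma$ be the elementary symmetric functions of $\gamma_3,\gamma_4,\gamma_5$, so that $e_1=\Sigma+S$, $e_2=\Pi+\Sigma S+P$, $e_3=\Gamma+\Pi S+\Sigma P$, $e_4=\Gamma S+\Pi P$, $e_5=\Gamma P$; substitution turns the claim into a polynomial inequality in $S,P,\Sigma,\Pi,\Gamma$ to be checked on the region given by $\Pi>\Sigma>3$, $\Gamma>1$, $0<S<2$, $\max(0,S-1)<P\le S^2/4$ and the further relations expressing that $\gamma_3<\gamma_4<\gamma_5$ are reals $>1$. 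As a sanity check, on the boundary stratum $\gamma_3,\gamma_4,\gamma_5\to 1$ the difference of the two sides collapses to $3(S^2-P)=3(\gamma_1^2+\gamma_1\gamma_2+\gamma_2^2)>0$, which both confirms the inequality and identifies $(\gamma_1+\gamma_2)^2>\gamma_1\gamma_2$ as the elementary fact carrying it.

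I expect this polynomial inequality to be the only genuine obstacle: it has five free parameters subject to several one-sided constraints, and its two sides are of comparable magnitude near the boundary of the admissible region, so any rough estimate will lose it. A practical route is to write the difference of the two sides as a sum, with non-negative coefficients, of products of the manifestly non-negative quantities $\gamma_i-1$ $(i\ge 3)$, $1-\gamma_i$ $(i\le 2)$ and the gaps $\gamma_i-\gamma_j$; the bookkeeping this requires is the real content of the proof.
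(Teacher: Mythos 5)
Your setup is sound and the preliminary reductions are correct: writing the four negativity conditions as $D_{k+1}<\beta D_k$ with $D_k:=e_k-e_{k-1}$, deducing $D_1>0$, $D_2>0$, $D_5<0$, observing that the sign sequence of $(D_k)$ can change only once, and eliminating $j=4$ via Newton's inequalities are all valid steps. But the proof is not complete, and the step you yourself identify as ``the heart of the argument'' is not merely unproven --- as stated it is \emph{false}. The inequality
\[
(e_2-e_1)(e_3-e_4)\ \ge\ (e_1-1)(e_4-e_5)
\]
fails on the region $0<\gamma_1<\gamma_2<1<\gamma_3<\gamma_4<\gamma_5$, even after adding the constraint $e_3>e_4$ that your case analysis provides. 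Take $\gamma_1=0.98$, $\gamma_2=0.99$, $\gamma_3=3.6$, $\gamma_4=3.7$, $\gamma_5=3.8$: then $e_1\approx 13.07$, $e_2\approx 63.90$, $e_3\approx 142.27$, $e_4\approx 139.55$, $e_5\approx 49.11$, so the left-hand side is about $50.8\times 2.7\approx 138$ while the right-hand side is about $12.1\times 90.4\approx 1092$. All the sign conditions $D_1,D_2,D_3>0>D_4,D_5$ that your argument guarantees are satisfied here, so no constraint you have extracted from the $\gamma_i$ alone excludes this point. What excludes it is that $D_2/D_1\approx 4.2>1$, i.e.\ the lower bound on $\beta$ already exceeds $1$; in other words, your elimination of $\beta$ between only the first and fourth conditions discards the information $\gamma_2<\beta<1$ (and the two middle conditions), and that information is indispensable. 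The statement you actually need is that the interval $\bigl(\max(\gamma_2,\,D_2/D_1),\,\min(1,\,D_5/D_4)\bigr)$, further cut down by $D_3<\beta D_2$ and $D_4<\beta D_3$, is empty --- a constrained inequality in more variables, which you have not formulated, let alone proven. Your check on the stratum $\gamma_3,\gamma_4,\gamma_5\to 1$ is correct but, as the example shows, does not propagate.

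For comparison, the paper's proof of this proposition does not attempt a single closed-form inequality in the $\gamma_i$. It first uses a monotonicity argument in $(\alpha,\beta)$ with $\alpha+\beta$ fixed to reduce to the two extremal configurations $\beta=y_2$ and $\alpha=\gamma_1$, and then treats these by separate means: elementary estimates exploiting $\gamma_3<2$ in the first case, and one-parameter deformation families pushed to a confluence of roots, followed by a discriminant computation on the boundary strata, in the second. The length and structure of that argument is evidence that the residual inequality you defer to ``bookkeeping'' is not a routine positivity of a polynomial with nonnegative coefficients in the obvious gap variables; at minimum it must be conditioned on $\beta<1$, and I see no indication that the conditioned version admits the sum-of-products certificate you envisage.
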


\begin{prop}\label{prop311}
The couple $(\Sigma_{2,4,2},(1,1,3))$ is not realizable.
  \end{prop}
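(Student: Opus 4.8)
The plan is to argue by contradiction, in the spirit of the proof of part~(3) of Theorem~\ref{tmSm22} and of Propositions~\ref{prop401} and~\ref{prop302}; the couple lives in degree $d=7$. Suppose a hyperbolic polynomial $Q$ realizes $(\Sigma_{2,4,2},(1,1,3))$. After a change $x\mapsto cx$ we may take $\alpha=1$, and then the order of moduli reads $0<\gamma_1<\beta<\gamma_2<1<\gamma_3<\gamma_4<\gamma_5$. Factoring off the two positive roots,
$$Q=\bigl(x^2-(1+\beta)x+\beta\bigr)\prod_{i=1}^5(x+\gamma_i)=:\sum_{k=0}^7q_kx^k ,$$
and writing $e_j:=e_j(\gamma_1,\dots,\gamma_5)$ (with $e_0=1$ and $e_j=0$ for $j<0$ or $j>5$), expansion gives $q_k=e_{7-k}-(1+\beta)e_{6-k}+\beta e_{5-k}$.

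First I would dispose of the ``outer'' sign conditions of $\Sigma_{2,4,2}$: $q_7=1>0$ and $q_0=\beta e_5>0$ are trivial; $q_6=e_1-(1+\beta)>0$ holds because $e_1\ge\gamma_3+\gamma_4+\gamma_5>3$; and $q_1=\beta e_4-(1+\beta)e_5>0$ is equivalent to $\sum_i 1/\gamma_i>1+1/\beta$, which holds because $1/\gamma_1>1/\beta$ and $1/\gamma_2>1$. Hence realizability of the couple (for this root configuration) is equivalent to the four ``inner'' inequalities $q_5,q_4,q_3,q_2<0$, and writing $D_j:=e_{j+1}-e_j$ these become $D_1<\beta D_0$, $D_2<\beta D_1$, $D_3<\beta D_2$, $D_4<\beta D_3$. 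Here $D_0=e_1-1>0$ and, by a short symmetric-function estimate using $\gamma_3,\gamma_4,\gamma_5>1$, also $D_1=e_2-e_1>0$, whereas $D_4=e_5-e_4<-e_5<0$ since $e_4/e_5=\sum_i 1/\gamma_i>2$. So the task is to show that no quintuple with $0<\gamma_1<\beta<\gamma_2<1<\gamma_3<\gamma_4<\gamma_5$ satisfies all four of these inequalities.

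The core of the argument --- and the step I expect to be the main obstacle --- is to derive this impossibility. The mechanism is that having \emph{three} of the $\gamma_i$ larger than $1$ forces $e_2$ and $e_3$ to be large (every pair-product and every triple-product among the indices $3,4,5$ already exceeds $1$), which clashes with $D_1<\beta D_0$, $D_2<\beta D_1$ and $D_3<\beta D_2$ because $\beta<1$; to make this quantitative one combines the four inequalities with the Newton inequalities $e_{k-1}e_{k+1}\le c_ke_k^2$ ($c_k<1$) for the degree-$5$ polynomial $\prod_{i=1}^5(x+\gamma_i)$, exactly as in the proof of part~(1) of Theorem~\ref{tmSm22}. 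I would organize the computation by cases according to the sign of $D_2=e_3-e_2$, which is the sign of the coefficient of $x^2$ in the cofactor $U:=Q/\bigl((x-\beta)(x+\gamma_1)\bigr)=(x-1)\prod_{i=2}^5(x+\gamma_i)$; one checks that $\sigma(U)$ can only be $\Sigma_{4,2}$ or $\Sigma_{3,3}$, since the three top coefficients of $U$ equal $1$, $e_1(\gamma_2,\dots,\gamma_5)-1>0$ and $e_2(\gamma_2,\dots,\gamma_5)-e_1(\gamma_2,\dots,\gamma_5)>0$, while its two lowest coefficients are negative, the penultimate one because $\gamma_2<1$. The factor $(x-\beta)(x+\gamma_1)$ has sign pattern $(+,-,-)$ (as $\gamma_1<\beta$), so in each case the signs of $q_2$ and $q_3$ become two further inequalities linking $\beta$, $\gamma_1$ and the coefficients of $U$; together with the Newton inequalities --- and, if needed, the $\tilde c=1$ classification of \cite{KoPuMaDe} applied to $U$ --- these should close both cases, the pattern $\Sigma_{4,2}$ being the delicate one, since $\Sigma_{4,2}$ by itself does admit a modulus of a negative root below $\alpha$, so there the contradiction must be extracted from the presence of the extra factor $x+\gamma_1$ with $\gamma_1<\beta<1$.
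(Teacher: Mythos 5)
Your setup is correct and cleanly done: with $\alpha=1$ and the order $\gamma_1<\beta<\gamma_2<1<\gamma_3<\gamma_4<\gamma_5$, the identity $q_k=e_{7-k}-(1+\beta)e_{6-k}+\beta e_{5-k}$, the verification of $q_6>0$ and $q_1>0$, and the reformulation of the inner sign conditions as $D_{j+1}<\beta D_j$, $j=0,\dots,3$, with $D_0,D_1>0$ and $D_4<0$, are all accurate. But the proposition \emph{is} the claim that these four inequalities are incompatible with the ordering of the moduli, and that is exactly the step you leave as a plan (``to make this quantitative one combines\dots with the Newton inequalities\dots these should close both cases''). The analogy with part~(1) of Theorem~\ref{tmSm22} does not transfer: there the Newton-inequality argument worked because a single product inequality $e_{d-4}e_{d-7}>e_{d-5}e_{d-6}$ was forced and could be refuted by a term count ($h_2/h_1>1$); here no such single inequality has been isolated, and the fallback to the $\tilde c=1$ classification of \cite{KoPuMaDe} applied to $U=(x-1)\prod_{i=2}^5(x+\gamma_i)$ gives nothing, since $U$ has only \emph{one} modulus of a negative root below its positive root, which that classification permits. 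You acknowledge yourself that the $\Sigma_{4,2}$ case is ``the delicate one'' and must somehow exploit the factor $x+\gamma_1$; that is precisely where a proof is needed and none is given.

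For comparison, the paper's proof needs only the incompatibility of the two inequalities $q_2<0$ and $q_5<0$ (not all four), and gets it by an extremal reduction: for $\alpha+\beta$ fixed, both $q_2$ (suitably normalized) and $q_5$ decrease as $\alpha\beta$ decreases, so it suffices to treat the boundary configurations $\beta=\gamma_1$ (where $q_5>0$ is checked directly) and $\alpha=\gamma_3=1$; in the latter case further monotonicity in the pairs $(\gamma_4,\gamma_5)$, $(\gamma_2,\gamma_4)$, $(\gamma_2,\gamma_5)$ reduces everything to three explicit confluent configurations, in each of which a positive combination $q_2+c\,q_5>0$ with $c>0$ is exhibited. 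If you want to complete your version, you would either have to carry out such an extremal/confluence reduction yourself or produce an explicit algebraic certificate (e.g.\ a positive combination of the $D_{j+1}-\beta D_j$ and of the ordering constraints) --- neither is present in the proposal as written.
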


\subsection{Proof of Proposition~\protect\ref{propfirst}
  \protect\label{subsecprpropfirst}}

  Part (1). Denote by $\beta <\alpha$ the positive and by
  $\gamma _1<\cdots <\gamma _{n+1}$ the
  moduli of the negative roots of a hyperbolic polynomial
  $Q:=\sum _{j=0}^dq_jx^j$ supposed to realize one of the mentioned 
  couples. Denote by $e_j$ the elementary symmetric polynomials
  of the quantities $\gamma _i$. We show that $q_{d-2}>0$ which means that the
  sign pattern of $Q$ is not $\Sigma _{2,n,2}$. Clearly

  $$q_{d-2}=\alpha \beta-(\alpha +\beta )e_1+e_2~.$$

  Suppose that $w\geq 5$. Recall that $u+v+w=d-2=n+1$.
  Set $\ell :=n-w+2=u+v+1$. Then
  $\gamma_{\ell -1}<\alpha <\gamma_{\ell}$.
  For $S:=\sum _{j=\ell}^{n+1}\gamma _j$, one has

  \begin{equation}\label{equabgamma}
    (\alpha +\beta )S<\sum _{\ell \leq i<j\leq n+1}\gamma _i\gamma _j~.
    \end{equation}
  Indeed, the left-hand side contains $2w$ products by two while the right-hand
  side contains $w(w-1)/2\geq 2w$ such products. Besides, for each $k$,
  $\ell \leq k\leq n+1$, it is true that
  $(S-\gamma _k)\alpha <(S-\gamma _k)\gamma _k$ and
  $(S-\gamma _k)\beta <(S-\gamma _k)\gamma _k$.
  Summing up these inequalities yields

  \begin{equation}\label{equRR}
    (w-1)(\alpha +\beta )S<2\sum _{\ell \leq i<j\leq n+1}\gamma _i\gamma _j~.
    \end{equation}

  For each $\nu <\ell$, it is true that
  $(\alpha +\beta )\gamma _{\nu}<(\gamma _{n}+\gamma _{n+1})\gamma _{\nu}$, so

  \begin{equation}\label{equRRR}
    (\alpha +\beta )\sum _{j=1}^{\ell -1}\gamma _j<(\gamma _{n}+\gamma _{n+1})
    \sum _{j=1}^{\ell -1}\gamma _j~.
    \end{equation}
  Equations (\ref{equRR}) and (\ref{equRRR}) imply

    $$(\alpha +\beta )e_1<(2/(w-1))\sum _{\ell \leq i<j\leq n+1}\gamma _i\gamma _j+
    (\gamma _{n}+\gamma _{n+1})\sum _{j=1}^{\ell -1}\gamma _j<e_2$$
    from which $q_{d-2}>0$ follows.

  Suppose that $w=4$ and $v\geq 1$. For $\mu \leq n-4$, it is true that

  $$(-\alpha -\beta +\gamma _n+\gamma _{n+1})\gamma _{\mu}>0~.$$
  On the other hand, as $\beta <\gamma _{n-3}<\alpha <\gamma _{n-2}$, one has 

  $$\begin{array}{rcl}
    (-\beta +\gamma _{n-3})(\gamma _{n-2}+\gamma _{n-1}+\gamma_n+
    \gamma _{n+1})&>&0~,\\ \\ 
    (-\alpha +\gamma _{n-2})(\gamma _{n-1}+\gamma _n+\gamma_{n+1})&>&0~,\\ \\
    (-\alpha -\beta )\gamma _{n-3}+(\gamma _{n}+\gamma _{n+1})\gamma _{n-1}&>&0~,\\ \\
    -\alpha \gamma _{n-2}+\gamma_n\gamma _{n+1}&>&0
    \end{array}$$
  which again proves that $q_{d-2}>0$, because the left-hand sides of these
  inequalities contain all products by two of moduli in which exactly one of
  the factors equals $\alpha$ or $\beta$ (but not necessarily all products
  $\gamma _i\gamma _j$, and not the product $\alpha \beta$; no product
  $\gamma _i\gamma _j$ is repeated).

  Part (2). Using the same notation one can write:

  $$\begin{array}{lclclcl}
    \alpha \gamma _{n-1}&<&\gamma _{n-1}\gamma _{n}~,&&
    \beta \gamma _{n-1}&<&\gamma _2\gamma _{n}~,\\ \\
    \alpha \gamma _{n}&<&\gamma _{n}\gamma _{n+1}~,&&\beta \gamma _{n}&<&
    \gamma _2\gamma _{n+1}\\ \\
    \alpha \gamma _{n+1}&<&\gamma _{n-1}\gamma _{n+1}~,&&
    \beta \gamma _{n+1}&<&\gamma _1\gamma _{n+1}~,\\ \\
    \alpha \gamma _1&<&\gamma _{n-1}\gamma _1~,&&\beta
    \gamma _{1}&<&\gamma _2\gamma _{1}\\ \\
    \alpha \gamma _2&<&\gamma _{n-1}\gamma_2~,&&\beta
    \gamma _{2}&<&\gamma _n\gamma _{1}
    \end{array}$$
  and for $3\leq k\leq n-2$, $(\alpha +\beta )\gamma _k<
  (\gamma _n+\gamma _{n+1})\gamma _k$. 
All these inequalities together imply $(\alpha +\beta )e_1<e_2$, so $q_{d-2}>0$.

\subsection{Proof of Proposition~\protect\ref{prop401}
  \protect\label{subsecprprop401}}

Suppose that the couple is realizable by a polynomial $Q:=\sum _{j=0}^7q_jx^j$,
  $q_7=1$, with positive roots
  $\alpha$ and $\beta$ and negative roots $-y$ and $-\gamma _i$, where

  \begin{equation}\label{equyba}
    y<\beta <\alpha <\gamma _1<\gamma _2<\gamma _3<\gamma _4~.
    \end{equation}
  As $q_6=-\alpha -\beta +y+\gamma _1+\cdots +\gamma _4$, one has $q_6>0$.
  With roots
  satisfying the above inequalities it is impossible to have $q_1<0$, see
  \cite[Theorem~3]{KoPuMaDe}. It is impossible to have $q_1=0$, $q_2<0$
  either, because
  one can perturb $q_1$ to make it negative by which action the roots remain
  real, distinct and satisfying the above inequalities. So $q_1>0$. 

  We denote by $E_j$ (resp. $G_j$) the $j$th elementary symmetric polynomial
  of the
  quantities $\gamma _i$ (resp. $1/\gamma _i$), $i=1$, $\ldots$,~$4$.
  We show that
  the inequalities (\ref{equyba}) and

  \begin{equation}\label{equq2q5}
    \begin{array}{ccrclc}
      q_2/\alpha \beta yE_4&:=&1/\alpha \beta -(1/\alpha +1/\beta )(G_1+1/y)+
      (1/y)G_1+G_2&<&0&{\rm and}\\ \\
      q_5&:=&\alpha \beta -(\alpha +\beta )(E_1+y)+yE_1+E_2&<&0&\end{array}
    \end{equation}
  cannot simultaneously hold true. In what follows we assume that $\gamma _1=1$
  which can be achieved by a linear change of the variable $x$ and we
  consider instead of the inequalities (\ref{equyba}) the corresponding
  inequalities $\leq$.

  We first observe that for $\alpha +\beta$ fixed, the left-hand sides in the
  inequalities (\ref{equq2q5}) are the smallest possible when the product
  $\alpha \beta$
  is the smallest possible (i.~e. when $\alpha -\beta$ is the maximal possible).
  For the second of these inequalities this is
  evident, for the first of them one has to notice that the coefficient of
  $1/\alpha \beta$ in the left-hand side equals
  $1-(\alpha +\beta )(G_1+1/y)<0$ (because
  $\alpha /y>1$). Thus it suffices to prove the proposition in the two
  extremal cases 
  $\beta =y$ and $\alpha =\gamma _1=1$.

  Suppose that $\beta =y$. Then the second of inequalities (\ref{equq2q5})
  reads:

  $$E_2<\alpha E_1+\beta ^2~.$$
  This is clearly impossible, because $\alpha \leq 1$, $\beta \leq 1$, so
  $\alpha E_1+\beta ^2\leq E_1+1$ while

$$E_2\geq \gamma _2+2\gamma _3+3\gamma _4\geq E_1+2~.$$

  Suppose that $\alpha =\gamma _1=1$. We denote by $e_j$ and $g_j$ the
  elementary
  symmetric polynomials of the quantities $\gamma _i$ and $1/\gamma _i$
  respectively, where $i=2$, $3$, $4$. Thus $E_1=1+e_1$, $E_2=e_1+e_2$,
  $G_1=1+g_1$ and $G_2=g_1+g_2$. The two inequalities (\ref{equq2q5}) read:

  \begin{equation}\label{equq2q5bis}\begin{array}{cclclc}
    (q_2)&:&-1+(\beta -y)g_1-\beta y+\beta yg_2&<&0&{\rm and}\\ \\ 
    (q_5)&:&-1-(\beta -y)e_1-\beta y+e_2&<&0~.&\end{array}
  \end{equation}
  Suppose first that $1-(\beta -y)g_1\geq 0$, i.~e. $\beta -y\leq 1/g_1$.
  Hence the left-hand
  side $L$ in $(q_5)$ satisfies the inequality

  $$L\geq -1-e_1/g_1-\beta y+e_2=(-g_1-e_1-\beta yg_1+e_2g_1)/g_1\geq 0~,$$
  because $g_1\leq 3$, $\beta y\leq 1$, so $-g_1-\beta yg_1\geq -6$, while
  $e_2g_1=2e_1+T$, where

  $$T:=\gamma _2\gamma _3/\gamma _4+
  \gamma _3\gamma _4/\gamma _2+
  \gamma _4\gamma _2/\gamma _3\geq 3(\gamma _2\gamma _3\gamma _4)^{1/3}\geq 3$$
  (by the inequality between the mean arithmetic and the mean geometric),
  so

  $$e_2g_1\geq 2e_1+3~~~\, {\rm and}~~~\,
  L\geq (-e_1-6+2e_1+3)/g_1=(e_1-3)/g_1\geq 0~.$$
Suppose now that $1-(\beta -y)g_1<0$. Then the inequality $(q_2)$ can hold true
only if $\beta y(-1+g_2)<0$, i.~e. if $-1+g_2<0$. This means that for
$\beta -y$ fixed,
the left-hand sides of $(q_2)$ and $(q_5)$ are minimal when
$\beta y$ is maximal.
  This is the case when $\beta =\alpha =1$. Hence the following inequality
  (derived from $(q_5)$) holds true:

  \begin{equation}\label{equey}
    -1-e_1+(e_1-1)y+e_2<0~.
    \end{equation}
  The left-hand side is minimal for $y=0$, because $e_1\geq 3$. Our aim is to
  show that if $\gamma _i\geq 1$, $i=2$, $3$, $4$, and if $g_2<1$, then
  $e_2>e_1+1$; this contradiction with inequality (\ref{equey}) would finish
  the proof of Proposition~\ref{prop401}.

  If $g_2<1$, then as $e_2g_2\geq 9$ (inequality between the mean arithmetic and
  the mean harmonic), one obtains $e_2>9$. So the inequality $e_2<e_1+1$ is
  possible only for $e_1>8$. But for fixed quantity $e_1$, the quantity
  $e_2$ is minimal for $\gamma _2=\gamma _3=1$, $\gamma _4=k>1$. For $e_1>8$,
  one should have $k>6$. 
  For such a choice of
  $\gamma _i$, one gets $e_2-e_1-1=2k+1-k-3=k-2>4>0$.
  This proves the proposition.

  \subsection{Proof of Proposition~\protect\ref{prop302}
    \protect\label{subsecprprop302}}

We suppose that the polynomial $Q:=\sum _{j=0}^7q_jx^j$, $q_7=1$, realizes the
  couple and for the moduli of its roots $\alpha$, $\beta$ and $-y_1$, $-y_2$,
  $-\gamma _1$, $-\gamma _2$, $-\gamma _3$ one has

  \begin{equation}\label{equy1y2ba}
y_1<y_2<\beta <\alpha <\gamma _1<\gamma _2<\gamma _3~.
\end{equation}
  As in the proof of Proposition~\ref{prop401} one shows that $q_6>0$ and
  $q_1>0$. We want to show that one cannot simultaneously have the inequalities
  (\ref{equy1y2ba}), $q_2<0$ and
  $q_5<0$. We denote by $E_j$, $G_j$ the elementary symmetric polynomials of the
  quantities $\gamma _i$ and $1/\gamma _i$ respectively. We set
  $r:=y_1+y_2$ and $t:=y_1y_2$. Then one must have

  \begin{equation}\label{equGrt}
    \begin{array}{ccrccc}
      q_2/\alpha \beta tE_3&:=&1/\alpha \beta-(1/\alpha +1/\beta )(G_1+r/t)+
      G_2+G_1r/t+1/t&<&0&{\rm and}\\ \\
      q_5&:=&\alpha \beta-(\alpha +\beta )(E_1+r)+rE_1+E_2+t&<&0~.&
      \end{array}
    \end{equation}
  For $\alpha +\beta$ fixed, the left-hand sides of the above inequalities
  are minimal
  when $\alpha \beta$ is minimal. In the equation for $q_2/\alpha \beta tE_3$
  the coefficient of $1/\alpha \beta$
  equals $1-(\alpha +\beta)(G_1+r/t)<0$ (because $\alpha r/t>\alpha /y_1>1$).
  So it suffices to
  consider the cases A) $\beta =y_2$ and B) $\alpha =\gamma _1$;
  in both cases one can assume that
  $\gamma _1=1$ which can be achieved by a linear change of the variable~$x$.
  \vspace{1mm}
  
  {\em Case A)}. If $\beta =y_2$, then these inequalities read:

  \begin{equation}\label{equGrtbis}\begin{array}{ccrclc}
      q_2/\alpha \beta tE_3&:=&G_2+G_1/y_1&<&G_1/\alpha +1/\alpha y_1+
      1/\beta ^2&{\rm and}\\ \\ 
q_5&:=&E_2+y_1E_1&<&\alpha E_1+\alpha y_1+\beta ^2~.&\end{array}
    \end{equation}
  One has $\alpha >2/3$. Indeed, if the second of inequalities
  (\ref{equGrtbis}) holds
  true, then it holds true for $y_1=0$, because $E_1>\alpha$.
  But for $y_1=0$ and
  $\alpha \leq 2/3$, it is true that $\alpha E_1+\beta ^2<2E_1/3+1<E_2$.

Next, one has $\gamma _3<2$. Indeed, for $\gamma _3\geq 2$, one gets

$$\begin{array}{rcccc}
  E_2&\geq &2\gamma _1+\gamma _2+\gamma _3&=&E_1+1\\ \\
  &\geq&\alpha E_1-y_1(E_1-\alpha )+1&=&
(\alpha -y_1)E_1+\alpha y_1+1\\ \\ &\geq&
  (\alpha -y_1)E_1+\alpha y_1+\beta ^2&&\end{array}$$
  which is a contradiction with (\ref{equGrtbis}).
  Thus

  \begin{equation}\label{equEG}
    \gamma _1=1\leq \gamma _2<\gamma _3<2~~~\, {\rm and}~~~\,
    4<E_1<5~,~~~\, 2<G_1<3~,~~~\, 5/4<G_2<G_1<3~.
  \end{equation}
  Suppose first that 
  $\alpha -y_1\leq 1/2$. For $E_1$ fixed, consider the function
  $$\Phi (\alpha ,y_1):=(\alpha -y_1)E_1+\alpha y_1$$
  on the closed pentagon

  $$\{ (\alpha ,y_1)~|~0\leq \alpha ,y_1\leq 1~,~\alpha -y_1\leq 1/2~\} ~.$$
  As $\partial \Phi /\partial y_1=-E_1+\alpha <0$, the maximal value of
  $\Phi$ is
  attained on the union of two segments $I_1\cup I_2$, where

$$I_1:=\{ y_1=0,~\alpha \in [0,1/2]\} ~~~\, {\rm and}~~~\,
  I_2:=\{ y_1=\alpha -1/2,~\alpha \in [1/2,1]\} ~.$$
  For $(\alpha ,y_1)\in I_1$, one has
  $\Phi =\alpha E_1\leq E_1/2<E_2-1<E_2-\beta ^2$ which is a
  contradiction with (\ref{equGrtbis}). For $(\alpha ,y_1)\in I_2$, one obtains

  $$\Phi =E_1/2+\alpha (\alpha -1/2)<E_1/2+1/2<E_2-1<E_2-\beta ^2$$
  which is again a contradiction.

  Suppose now that the couple $(\alpha ,y_1)$ belongs to the segment
  $I_3:=\{ \alpha -y_1=\delta ,~\alpha \in [\delta ,1],~\delta \in [1/2,1]\}$.
  Set

  $$\Psi :=(G_1(\alpha -y_1)-1)/\alpha y_1~.$$
  As $G_1>2$ and
  $\alpha -y_1\geq 1/2$, one has
  $\Psi >0$. Moreover, $\Psi >(G_1\delta -1)/(1-\delta )$.
  From (\ref{equGrtbis}) one deduces that $1/\beta ^2>G_2+\Psi$, so

  $$\beta ^2<1/(G_2+\Psi )<1/\left( G_2+\frac{G_1\delta -1}{1-\delta}\right)
  =(1-\delta)/K~,$$
  where

  $$\begin{array}{ccccccc}
    K&:=&G_2(1-\delta )+G_1\delta -1&=&
    G_2+(G_1-G_2)\delta -1&&\\ \\ &>&G_2+(G_1-G_2)/2-1&=&(G_1+G_2-2)/2&>&5/8~,
    \end{array}$$
  see (\ref{equEG}). Thus $\beta ^2<(8/5)(1-\delta )$. One can rewrite the
  second of inequalities (\ref{equGrtbis}) in the form $E_2-\beta ^2<\Phi$.
  However on the segment $I_3$ one has

  $$\Phi =E_1\delta +\alpha (\alpha -\delta )\leq E_1\delta +(1-\delta )<
  E_2-\beta ^2~,$$
  because $E_2=E_2\delta +E_2(1-\delta )$ with $E_1\delta <E_2\delta$ and

  $$(1-\delta )+\beta ^2<(13/5)(1-\delta )<3(1-\delta )<E_2(1-\delta )~.$$
  This contradiction shows that the system of inequalities (\ref{equGrtbis})
  has no solution in Case~A). 
\vspace{1mm}

  {\em Case B)}. Suppose that $\alpha =\gamma _1=1$ and that there exists a
    polynomial $Q:=(x^2-1)Y(x)$ satisfying the conditions of Case B); the
    roots of $Y$ are 
    $\beta$, $-\gamma _2$, $-\gamma _3$, $-y_1$ and $-y_2$. We
    consider a one-parameter family of polynomials $Q_t:=Q-tU$,
    $U:=x^2(x^2-1)(x-\beta )$, $t\geq 0$. The first and last coefficients of $U$
    are positive which means that the coefficients $q_2$ and $q_5$ of $Q_t$
    remain negative for $t\geq 0$; the coefficients $q_0$, $q_1$, $q_6$ and
    $q_7$ do not change.

    As $t$ increases and as long as $Q_t$ remains hyperbolic, 
    \vspace{1mm}
    
    1) the roots $-\gamma_3$ and $-y_2$ move to the left while $-\gamma _2$ and
    $-y_1$ move to the right; $-y_1$ never reaches $0$, because $U(0)=0$, while
    $-\gamma _2$ and $-y_2$ could reach $-1$;
    \vspace{1mm}

     2) the root $-\gamma _3$ cannot go to $-\infty$, because this would mean
    that $q_7=0$;
    \vspace{1mm}
    
    3) neither of
    the coefficients $q_3$ and $q_4$ can vanish. Indeed, for a hyperbolic
    polynomial without root at $0$, it is impossible to have two consecutive
    vanishing
    coefficients, and when a coefficient is $0$, then the two surrounding
    coefficients must have opposite signs (\cite[Lemma~7]{KoMB}).
\vspace{1mm}

It is clear that for $t>0$ sufficiently large, one has $Q_t(2)<0$, so $Q_t$
has more than $2$ positive roots. This can happen only if $Q_t$ is no longer
hyperbolic, because if it is, then it keeps the sign pattern
$\Sigma _{2,4,2}$ (see 3)) and hence has exactly $2$ positive roots.

Loss of hyperbolicity can occur only if the following couple or triple of
roots coalesce: $(-\gamma _2,-1)$, $(-y_2,-1)$ or $(-\gamma _2,-1,-y_2)$.
The triple confluence is a particular case of
$(-\gamma _2,-1)$.
\vspace{1mm}


{\em Case B.1)}. We assume that for $t=t_0>0$, one has
$-\gamma _2=-\gamma _1=-1$.

We consider the one-parameter family $R_s:=Q_{t_0}-sV$, where
$V:=x^2(x+1)^2(x-1)$ and $s\geq 0$.
For $s$ small enough, the sign pattern of $R_s$ is $\Sigma _{2,4,2}$.
As $s$ increases, $-\gamma _3$ moves to the left without reaching $-\infty$,
$-y_2$ and $\beta$ move to the right and $-y_1$ moves to the left.
Hence for some
$s=s_0$, either $\beta$ coalesces with $1$ or $-y_1$ and $-y_2$ coalesce.
\vspace{1mm}

{\em Case B.1.1)}. Suppose that $\beta =1$. Then $R_{s_0}=(x^2-1)^2W$,
where $W:=x^3+Ax^2+Bx+C$ has only negative roots, so $A$, $B$, $C>0$.
Hence

\begin{equation}\label{equsign}
  \begin{array}{lcl}q_2=A-2C<0~,&&q_3=1-2B<0~,\\ \\
    q_4=-2A+C<0&{\rm and}&q_5=B-2<0~.\end{array}
  \end{equation}
The discriminant set 

$$\{ \rho :={\rm Res}(W,W',x)=0\} ~,~~~\,
\rho (A,B,C)=4A^3C-A^2B^2-18ABC+4B^3+27C^2~,$$
separates the set $H_3$ of
hyperbolic polynomials (where $\rho <0$) in
the space $OABC$ from the set of polynomials having exactly one real root
(i.~e. where $\rho >0$). We show that the discriminant set does not intersect
the domain $\mathcal{D}$ in the space $OABC$ defined by conditions
(\ref{equsign}). As
$\rho(1,1,1)=16>0$, the polynomial $R_{s_0}$ is not hyperbolic which is a
contradiction.

\begin{lm}\label{lmnotintersect}
  The set $\{ \rho =0\}$ does not intersect the border of the domain
  $\mathcal{D}:=\{ 0<A/2<C<2A~,~B\in (1/2,2)\}$.
  \end{lm}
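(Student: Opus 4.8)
The plan is to verify directly that none of the faces of the boundary of the box $\mathcal{D}$ meets the discriminant surface $\{\rho=0\}$, exploiting the fact that $\mathcal{D}$ is a simple semialgebraic set cut out by the linear/affine inequalities $A/2<C$, $C<2A$, $1/2<B$, $B<2$ (together with $A>0$, which follows from $A/2<C<2A$). On each facet one of these inequalities becomes an equality, so $\rho$ restricts to a polynomial in the two remaining free parameters, and it suffices to show this restricted polynomial has a fixed sign on the (relatively open) facet. Concretely I would treat the four facets in turn:

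\emph{The facets $C=A/2$ and $C=2A$.} Substituting $C=A/2$ gives $\rho(A,B,A/2)=2A^4-A^2B^2-9A^2B+4B^3+27A^2/4$; substituting $C=2A$ gives $\rho(A,B,2A)=8A^4-A^2B^2-36A^2B+4B^3+108A^2$. In each case one is left with a polynomial in $(A,B)$ with $A>0$ and $B\in(1/2,2)$, and the claim is that it is strictly positive there. The natural way to see this is to view it as a quadratic (or quartic) in $A$ and check that its discriminant in $A$ is negative for $B\in(1/2,2)$, or simply to bound the negative terms: since $B<2$, one has $-A^2B^2>-4A^2$ and $-9A^2B>-18A^2$ (resp. $-36A^2B>-72A^2$), while $4B^3>4(1/2)^3=1/2>0$, so for instance on $C=A/2$ one gets $\rho>2A^4-4A^2-18A^2+27A^2/4+1/2=2A^4-(89/4)A^2+1/2$, which is negative only on a bounded $A$-interval — so this crude bound is not quite enough and one must keep the $+4B^3$ and $-A^2B^2-9A^2B$ terms coupled, or split into $A$ large vs.\ $A$ small. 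I would organize it as: for $A\ge A_0$ (some explicit threshold, e.g.\ $A_0=4$) positivity is clear from the $A^4$ term dominating; for $0<A<A_0$ one uses that $4B^3-A^2B^2-9A^2B$, as a function of $B$ on $(1/2,2)$ for $A$ in a bounded range, stays above $-2A^4-27A^2/4$ — this is a one-variable estimate that can be made explicit.

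\emph{The facets $B=1/2$ and $B=2$.} Here $\rho$ becomes a polynomial in $(A,C)$ with the constraint $A/2<C<2A$ (a sector in the first quadrant). Setting $B=2$ gives $\rho(A,2,C)=4A^3C-4A^2-36AC+32+27C^2$; setting $B=1/2$ gives $\rho(A,1/2,C)=4A^3C-A^2/4-9AC+1/2+27C^2$. On the sector $A/2<C<2A$ one can substitute $C=\lambda A$ with $\lambda\in(1/2,2)$, reducing to a polynomial in $A$ alone with coefficients depending on $\lambda$: e.g.\ $\rho(A,2,\lambda A)=4\lambda A^4+(27\lambda^2-4)A^2-36\lambda A+32$. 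Since $27\lambda^2-4>27/4-4>0$ for $\lambda>1/2$, all of $4\lambda A^4$, $(27\lambda^2-4)A^2$ and $32$ are positive, and only $-36\lambda A$ is negative; one checks $4\lambda A^4+(27\lambda^2-4)A^2+32>36\lambda A$ by AM–GM on the first and last terms (or by noting the left side at the minimizing $A$ is bounded below by a positive constant uniformly in $\lambda\in[1/2,2]$). The case $B=1/2$ is entirely analogous, with even more room since the negative coefficient $-9\lambda$ is smaller.

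The main obstacle is purely bookkeeping: on the two facets $C=A/2$ and $C=2A$ the reduced polynomial in $(A,B)$ has \emph{two} negative monomials ($-A^2B^2$ and a large negative $-9A^2B$ or $-36A^2B$) and no clean product structure, so the crude term-by-term bound fails on a bounded $A$-window and one genuinely has to do a two-regime argument (large $A$ vs.\ small $A$) with explicit constants, or alternatively compute the discriminant of the cubic-in-$C$ restriction and show it is negative on $B\in(1/2,2)$. Everything else (the sector substitution $C=\lambda A$ on the $B$-facets, the positivity of $27\lambda^2-4$, the final AM–GM) is routine. Once all four facets are shown to avoid $\{\rho=0\}$, the lemma follows: the boundary of the connected box $\mathcal{D}$ is disjoint from the discriminant surface.
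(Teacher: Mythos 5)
Your overall strategy coincides with the paper's: restrict $\rho$ to each of the four facets $C=A/2$, $C=2A$, $B=1/2$, $B=2$ and show the restriction is strictly positive. But at the two places where the real work lies, your proposal either stops short or rests on a miscalculation. On the facet $C=A/2$ the restriction is $2A^4+(27/4-B^2-9B)A^2+4B^3$, and since the coefficient of $A^2$ is negative (about $-15$ at $B=2$) the term-by-term bound fails, as you note. Your proposed repair -- ``a one-variable estimate that can be made explicit'' -- is exactly the crux: minimizing over $A$ reduces the claim to $\left(4B^2+36B-27\right)^2<512B^3$ on $(1/2,2)$, i.e.\ to the factorization $(2B-1)(2B-9)^3<0$ that the paper actually proves, and you never verify it. Your alternative (``check that the discriminant in $A$ is negative'') is the paper's method on this facet, but note it cannot be applied blindly: on the facet $C=2A$ the analogous discriminant is \emph{positive} (it equals $(B-2)(B-18)^3/16>0$ for $B<2$), and one must instead observe that there the quadratic in $A^2$ has all coefficients positive, so positivity is immediate.

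On the facets $B=1/2$ and $B=2$ there is an algebra error that invalidates your argument as written: with $C=\lambda A$ the term $-36AC$ becomes $-36\lambda A^2$, not $-36\lambda A$, so the restriction at $B=2$ is $4\lambda A^4+\left(27\lambda^2-36\lambda-4\right)A^2+32$. The coefficient of $A^2$ is negative for most of $\lambda\in(1/2,2)$ (it equals $-13$ at $\lambda=1$), so the claim that ``only $-36\lambda A$ is negative'' is false and the AM--GM between $4\lambda A^4$ and $32$ does not close the argument. A corrected version again comes down to a discriminant bound such as $\left(27\lambda^2-36\lambda-4\right)^2<512\lambda$ on $(1/2,2)$ (which degenerates to equality at $\lambda=2$, where one must additionally check that the double root occurs at negative $A^2$); the paper avoids this by bounding $\partial\rho/\partial C$ and $\partial\rho/\partial A$ on the sector and evaluating at the extremal values $C=A/2$, $A=3/2$. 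In short: the decomposition is right, but the positivity assertions on the facets $C=A/2$, $B=1/2$ and $B=2$ are precisely the content of the lemma, and they are not established in your proposal.
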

The lemma implies that the set $\{ \rho =0\}$ does not intersect the domain
$\mathcal{D}$. Indeed, the set $\{ \rho =0\}$ contains the curve
$A=3t$, $B=3t^2$, $C=t^3$ (polynomials with a triple root at $-t$). This curve
is not contained in $\mathcal{D}$, because it contains points with $C>2A$.
Hence if the set $\{ \rho =0\}$ contains a point from $\mathcal{D}$, then
it contains also a point not from $\overline{\mathcal{D}}$ hence also
a point from the border of $\mathcal{D}$.

\begin{proof}[Proof of Lemma~\ref{lmnotintersect}]
For $A=2C$, one obtains $\rho =32C^4+\tau C^2+4B^3$, $\tau :=-4B^2-36B+27$, whose
discriminant

$$\tau ^2-4\times 32\times 4B^3=(2B-1)(2B-9)^3$$
is negative for $B\in (1/2,2)$. For $A=C/2$, one gets
$\rho =C^4/2+\lambda C^2+4B^3$, $\lambda :=-B^2/4-9B+27$, with discriminant

$$\Delta _0:=\lambda ^2-4\times (1/2)\times 4B^3=(-2+B)(B-18)^3/16$$
which is positive for $B\in (1/2,2)$. However the
biquadratic in $C$ equation $\rho =0$ has no real solution, because
$\Delta _0-\lambda ^2=-8B^3<0$. Thus the discriminant set $\{ \rho =0\}$ does
not intersect the sets $\{ A=2C>0,~B\in (1/2,2)\}$ and
$\{ A=C/2>0,~B\in (1/2,2)\}$.

For $B=1/2$, making use of $C/2<A<2C$, one gets

$$\begin{array}{rclclcl}\rho &=&27C^2+4A^3C-9AC-A^2/4+1/2&&&&\\ \\ &>&
  27C^2+4A^3C-18C^2-C^2+1/2&=&8C^2+4A^3C+1/2&>&0~.\end{array}$$

For $B=2$, one obtains $\rho =27C^2+4A^3C-36AC-4A^2+32$. The derivative 
$\partial \rho /\partial C=4A^3-36A+54C$ takes positive values for
$A\geq 3/2$, $A/2<C<2A$. This follows easily from the fact that the graphs of
the functions $A/2$ and $(-4A^3+36A)/54$ intersect exactly for $A=0$ and
$A=\pm 3/2$. Hence for $A\geq 3/2$ and $A/2\leq C\leq 2A$,
$\rho$ is minimal when $C=A/2$; in this case it equals 

$$2A^4-61A^2/4+32=2(A^2-61/16)^2+375/128>0~.$$
For $0<A\leq 3/2$ and
$A/2\leq C\leq 2A$, the quantity $\rho$ takes its minimal value for $A=3/2$.
Indeed,

$$\partial \rho /\partial A=12A^2C-8A-36C\leq 27C-8A-36C<0~~~\, {\rm for~}~
A\leq 3/2~.$$
But for $A=3/2$, one has $\rho =27C^2-81C/2+23=27(C-3/4)^2+125/16>0$.
Thus the set $\{ \rho =0\}$ does not intersect the border
of the domain~$\mathcal{D}$. 
\end{proof}

\vspace{1mm}

{\em Case B.1.2)}. Suppose that $-y_1$ and $-y_2$ coalesce.
We set $c:=y_1=y_2$ and
we consider the polynomial

$$\sum _{j=0}^7q_jx^j=:Q:=(x+1)^2(x+c)^2(x-1)h(x)~,~~~\, h:=x^2+Ax-B~,$$
where $A=\gamma _3-\beta >0$ and $B=\gamma_3\beta$. As $c\in (0,1)$, one has

\begin{equation}\label{equh}
  h(-1)=1-A-B<0~.
  \end{equation}
The conditions $q_2<0$ and $q_5<0$ read:

\begin{equation}\label{equneqq2q5}
  \begin{array}{lcl}
  q_2=-Ac^2-Bc^2-2Ac+2Bc-c^2+B<0~,&{\rm i.~e.}&B<
  \frac{(c^2+2c)A+c^2}{-c^2+2c+1}~,\\ \\
  q_5=2Ac+c^2+A-B+2c-1<0~,&{\rm i.~e.}&B>(2c+1)A+c^2+2c-1~.
  \end{array}
  \end{equation}
We denote by $(q_2)$, $(q_5)$ and $(g)$ the straight lines in the space $OAB$
defined by the conditions $q_2=0$, $q_5=0$ and $A+B=1$ (see (\ref{equh})).
For $c\in (0,1)$, the slope
$(c^2+2c)/(-c^2+2c+1)$ of the line $(q_2)$ is smaller than the slope
$2c+1$ of $(q_5)$; both slopes are positive. The $A$-coordinates of the
intersection points $(q_2)\cap (q_5)$ and $(q_2)\cap (g)$ equal respectively

$$A':=-(c^4-5c^2+1)/(2c^3-2c^2-2c-1)~~~\, {\rm and}~~~\, A'':=
(1+2c-2c^2)/(4c+1)~.$$
For $c\in (0,1)$, one has

$$A''-A'=9c^2(c^2-2c-1)/((2c^3-2c^2-2c-1)(4c+1))>0~.$$
Hence for $c\in (0,1)$, the half-plane $\{ q_5<0\}$ does not intersect the
sector $\{ q_2<0,~1-A-B<0\}$, so a polynomial $Q$ as above does not exist. 
\vspace{1mm}

{\em Case B.2)}. We assume that for $t=t_0>0$, one has $-\gamma _1=-y_2=-1$.

We consider the one-parameter family $T_s:=Q_{t_0}-sV$, where as above 
$V:=x^2(x-1)(x+1)^2$ and $s\geq 0$. For $s>0$ small enough, the sign pattern
of $T_s$ is $\Sigma _{2,4,2}$. As $s$ increases, $-\gamma _3$ moves to the left
without reaching $-\infty$, $-y_2$ and $\beta$ move to the right while $-y_1$
moves to the left. Hence for some $s=s_1$, one has one of the confluences
$(\beta ,1)$, $(-\gamma _2,-1)$ or $(-y_1,-1)$. In the latter two cases there
is a triple root at~$-1$.
\vspace{1mm}

{\em Case B.2.1)}. Suppose that $\beta =\alpha =1$. Then one can apply the
involution $i_r$ (this does not change the sign pattern $\Sigma_{2,4,2}$) and
obtain a polynomial corresponding to Case~B.1.1) which was already studied.
\vspace{1mm}

{\em Case B.2.2)}. Suppose that $-\gamma _2=-\gamma _1=-y_2=-1$. Then

$$T_{s_1}=(x+1)^3(x-1)(x-\beta )(x+\gamma _3)(x+y_1)=\sum _{j=0}^7q_jx^j~,$$
where $q_2+2q_5=3(y_1-\beta +\gamma _3)>0$. Hence the coefficients $q_2$
and $q_5$ cannot be both negative; the sign pattern of $T_{s_1}$ is not
$\Sigma_{2,4,2}$. 
\vspace{1mm}

{\em Case B.2.3)}. Suppose that $-\gamma _1=-y_2=-y_1=-1$. Then

$$T_{s_1}=(x+1)^3(x-1)(x-\beta )(x+\gamma _2)(x+\gamma_3)=\sum _{j=0}^7q_jx^j~,$$ 
where $q_2+2q_5=3(\gamma _2+\gamma _3-\beta )>0$. Hence the coefficients $q_2$
and $q_5$ cannot be both negative and again the sign pattern of $T_{s_1}$ is not
$\Sigma_{2,4,2}$.

  \subsection{Proof of Proposition~\protect\ref{prop311}
    \protect\label{subsecprprop311}}

   Suppose that the couple is realizable by a hyperbolic polynomial
   $Q:=\sum _{j=0}^7q_jx^j$ the moduli of whose positive roots
   $\beta <\alpha$ and of whose 
  negative roots $-\gamma _i$ satisfy the inequalities

  $$\gamma_1<\beta <\gamma _2<\alpha <\gamma _3<\gamma _4<\gamma_5~.$$
  For the coefficients $q_0=\alpha \beta \gamma _1\cdots \gamma_5$,
  $q_2$ and $q_5$
  of the polynomial $Q$ it is
  true that

  $$\begin{array}{cclcl}
    q_2/q_0&=&\sum _{1\leq i<j\leq 5}1/\gamma_i\gamma_j-
    ((\alpha +\beta )/\alpha \beta )
    \sum _{j=1}^51/\gamma _j+1/\alpha \beta &<&0~,\\ \\
    q_5&=&\alpha \beta-(\alpha +\beta )\sum _{j=1}^5\gamma _j+
    \sum _{1\leq i<j\leq 5}\gamma _i\gamma_j&<&0~.\end{array}$$
  This however is impossible. Indeed, for positive $\alpha$ and $\beta$ and
  when the sum $\alpha +\beta$ is fixed, 
 the product $\alpha \beta$ is minimal when $\alpha$ and $\beta$
  are as far apart as possible. Hence it suffices to consider the two
  extremal situations:
  \vspace{1mm}
  
  1) $\beta =\gamma _1$. In this case

  $$\begin{array}{ccl}
    q_5&=&\sum _{2\leq i<j\leq 5}\gamma_i\gamma_j-\alpha \sum _{j=2}^5\gamma _j
    -\gamma _1^2\\ \\
    &=&(\gamma _3-\alpha )\gamma_2+(\gamma _4-\alpha )\gamma _3
    +(\gamma _5-\alpha )\gamma _4+(\gamma _3-\alpha )\gamma _5\\ \\
    &&+(\gamma _2\gamma _4+\gamma_2\gamma_5-\gamma_1^2)>0
  \end{array}$$
  and the sign pattern of $Q$ is not $\Sigma_{2,4,2}$.
  \vspace{1mm}

   2) $\alpha =\gamma_3$. We assume that $\alpha =\gamma _3=1$.
  For fixed product $\gamma _4\gamma _5$, the sum
  $\gamma _4+\gamma _5$ is the minimal possible when $\gamma _4$ and
  $\gamma _5$ are the closest possible to one another, i.~e. when they are
  equal. One has

  $$\begin{array}{ccll} 
    q_2/q_0&=&\frac{1}{\gamma _4\gamma _5}\left( 1+(\gamma _4+\gamma _5)L_2\right) +
    \psi _2~,&L_2:=
    \frac{1}{\gamma _1}+\frac{1}{\gamma _2}-\frac{1}{\beta} ~,\\ \\
q_5&=&(\gamma _4+\gamma _5)L_5+\gamma _4\gamma _5+
    \psi _5~,&L_5:=\gamma_1+\gamma _2-\beta ~,
  \end{array}$$
  where the quantities $\psi _2$ and $\psi _5$ do not depend on $\gamma _4$
  and $\gamma _5$. As the quantities $L_2$ and $L_5$ are positive, if one has
  $q_2<0$ and $q_5<0$ for some values of the moduli $\gamma _i$ and $\beta$
  such that $\gamma _4\gamma _5=\lambda >0$, then
  these inequalities will hold true also for
  $\gamma _4=\gamma _5=\sqrt{\lambda}$.

   A similar reasoning holds true for the couples $(\gamma_2, \gamma _4)$ and
  $(\gamma _2,\gamma _5)$, but in these cases the modulus $1=\gamma _3=\alpha$
  must remain between the two moduli of the couple, so one can only claim that
  the sum $\gamma_2+\gamma _4$ or $\gamma _2+\gamma _5$ is minimal when one of
  these two moduli equals $1$.

  We apply the reasoning to the couple $(\gamma_2, \gamma _4)$. Hence we can
  assume that either $\gamma _2=1$ or $\gamma _4=1$. If $\gamma _2=1$, then we
  apply the reasoning to the couple $(\gamma _4, \gamma _5)$
  to obtain the case

  $${\rm A)}~~~\, \gamma _2=\gamma _3=\alpha =1~,~~~\, \gamma _4=\gamma _5~.$$
  If $\gamma _4=1$,  then we
  apply the reasoning to the couple $(\gamma _2, \gamma _5)$ to obtain
  the cases

 $$\begin{array}{lll}
    {\rm B)}&\gamma _2=\gamma _3=\gamma _4=\alpha =1&{\rm and}\\ \\
    {\rm C)}&\gamma _3=\gamma _4=\gamma _5=\alpha =1~.&
  \end{array}$$
  In case A) one has to deal with the polynomial
  
  $$(x+\gamma _1)(x-\beta )(x-1)(x+1)^2(x+\gamma _4)^2~~~\, {\rm with}$$

  $$q_2+(1+\gamma _1)q_5=\beta \gamma _4^2(1-\gamma _1)+
  (2\gamma _4-\beta )(\gamma _1^2+\gamma _1)+(2\gamma _4-\beta -1)+
  \gamma _1^2>0~,$$
  so it is impossible to have $q_2>0$ and $q_5>0$.

  In case B) we consider the polynomial
  $$(x+\gamma _1)(x-\beta )(x-1)(x+1)^3(x+\gamma _5)~~~\, {\rm with}$$

  $$q_2+q_5=\gamma _1\beta +\gamma _5(\beta -\gamma _1)+(\gamma _5-\beta )+
  \gamma_1>0~,$$
  so again one cannot have $q_2<0$ and $q_5<0$ at the same time.

  In case C) one considers the polynomial

  $$(x+\gamma _1)(x-\beta )(x+\gamma_2)(x-1)(x+1)^3~~~\,
  {\rm with}$$

  $$q_2+q_5=\beta \gamma_2+\beta \gamma_1+(1-\gamma_2)\gamma_1+
  (\gamma_2-\beta)>0$$
  and one cannot have $q_2>0$ and $q_5>0$.

\subsection{Proof of Proposition~\protect\ref{prop2n2}
                 \protect\label{subsecprprop2n2}}

We prove the proposition by induction on $n$. We prove part (2) first. 
The induction base are the couples
$(\Sigma _{2,4,2}, (2,0,3))$ and
$(\Sigma _{2,4,2}, (1,1,3))$, see
Propositions~\ref{prop302}
and \ref{prop311}. Suppose that it has been proved that the couple
  $(\Sigma _{2,n,2}, (u,v,3))$, $u+v=n-2$, $u>0$, is not realizable.
  This means that for any positive numbers

  \begin{equation}\label{equygamma}
    y_1<\cdots <y_u<\beta <y_{u+1}<\cdots <y_{u+v}<\alpha
  <\gamma _1<\gamma _2<\gamma _3\end{equation}
  (interpreted as the moduli of the roots of a hyperbolic polynomial, the
  positive roots being $\alpha$ and $\beta$) it is impossible
  to simultaneously have

  \begin{equation}\label{equq2q5general}
    \begin{array}{ccrccc}
      q_2&:=&P_*\times (1/\alpha \beta -(1/\alpha +1/\beta )G_1+G_2)&<&0&
      {\rm and}\\ \\
      q_5&:=&\alpha \beta -(\alpha +\beta )E_1+E_2&<&0~,&
      \end{array}
    \end{equation}
  where $P_*:=\alpha \beta \gamma _1\gamma _2\gamma _3y_1\cdots y_{n-2}$,
  $E_j$ (resp. $G_j$) denoting the corresponding elementary symmetric
  polynomials of the quantities $y_i$ and $\gamma _j$ (resp. of $1/y_i$ and
  $1/\gamma _j$). Indeed, the inequalities (\ref{equygamma}) provide the
  positive signs of $q_1$ and $q_6$. If inequalities (\ref{equq2q5general})
  hold true, then as there are just two positive roots, by Descartes' rule
  of signs one must have
  $q_3<0$ and $q_4<0$; the equalities $q_3=0$ and $q_4=0$ are impossible
  by virtue of \cite[Lemma~7]{KoMB}.

  We set
  $\tilde{q}_2:=1/\alpha \beta-(1/\alpha +1/\beta )G_1+G_2$.
  Suppose that one increases $n$ to $n+1$, so one adds a new quantity
  $y_j$ or $\gamma _j$ denoted by
  $\gamma$. Then
  $P_*\mapsto P_*\times \gamma$ and the new quantities $\tilde{q}_2$
  and $q_{d-2}$ equal respectively

  $$\tilde{q}_2+(1/\gamma )(G_1-1/\alpha -1/\beta )~~~\, \, {\rm and}~~~\, \, 
  q_{d-2}+\gamma (E_1-\alpha -\beta )~.$$
  As
  $y_1<\beta$ and $y_2<\alpha <\gamma _1<\gamma _2$, one has
  $E_1-\alpha -\beta >0$
  and $G_1-1/\alpha -1/\beta >0$. This means that both $\tilde{q}_2$ and
  $q_{d-2}$ increase and
  hence after passing from $n$ to $n+1$ they still cannot be both negative.

  To prove part (1) of the proposition one observes first that for $u=0$, it
  follows from part (1) of Proposition~\ref{propfirst}. So one can suppose
  that $u\geq 1$. Then the proof of part (2) is performed in much the same way
  as the proof of part (1). There are only two differences:

  1) the induction base includes also the couple $(\Sigma _{2,4,2}, (1,0,4))$
  which is non-realizable, see Proposition~\ref{prop401};

  2) the inequalities (\ref{equygamma}) have to be replaced by  

  $$ y_1<\cdots <y_u<\beta <y_{u+1}<\cdots <y_{u+v}<\alpha
  <\gamma _1<\gamma _2<\gamma _3<\gamma _4~.$$
  The rest of the reasoning is the same.

\end{document}